\definecolor{grayDisabled}{rgb}{0.75, 0.75, 0.75}
\newcolumntype{d}{>{\columncolor{grayDisabled}}r}
\newcolumntype{e}{>{\columncolor{grayDisabled}}l}
\newcommand{\Vast}{\bBigg@{4}}
\newtheorem{theorem}{Theorem}[section]
\newtheorem{lemma}[theorem]{Lemma}
\newtheorem*{theorem*}{Theorem}
\theoremstyle{definition}
\newtheorem{definition}{Definition}[section]
\theoremstyle{remark}
\newtheorem*{remark}{Remark}
\begin{document}

\title[Recurrence relations in (s,t)-uniform simplicial complexes]{Recurrence relations in (s,t)-uniform simplicial complexes}

\author[Ioana-Claudia Laz\u{a}r]{
Ioana-Claudia Laz\u{a}r\\
Politehnica University of Timi\c{s}oara, Dept. of Mathematics,\\
Victoriei Square $2$, $300006$-Timi\c{s}oara, Romania\\
E-mail address: ioana.lazar@upt.ro}

\date{}

\hyphenation{i-so-pe-ri-me-tric}

\begin{abstract}

We introduce $(s,t)$-uniform simplicial complexes.
We show that the lengths of spheres in minimal filling diagrams associated to loops in such complexes are the terms of certain recurrence relations.
We study the limit of the ratio of the area of such spheres over their length as the radii of spheres grow.
Besides we compute the average Gaussian curvature for vertices inside these spheres.

\hspace{0 mm} \textbf{2010 Mathematics Subject Classification}: 05E45, 20F67, 11B37.

\hspace{0 mm} \textbf{Keywords}: simplicial complex, minimal filling diagram, $t$-uniformness, recurrence relation, Gaussian curvature
\end{abstract}

\pagestyle{myheadings}

\markboth{}{}

\vspace{-10pt}

\maketitle

\section{Introduction}

Isoperimetric inequalities relate the length of closed curves to the infimal area of the discs which they bound.
Every closed loop of length $L$ in the Euclidean plane bounds a disc whose area is less than $L^{2} / 4 \pi$, and this bound is optimal.
Thus one has a quadratic isoperimetric inequality for loops in Euclidean space.
In contrast, loops in real hyperbolic space satisfy a linear isoperimetric inequality: there is a constant $C$ such that every closed loop of length $L$ in hyperbolic space bounds a disc whose area is less than or equal to $C \cdot L$.

With a suitable notion of area, a geodesic space $X$ is $\delta$-hyperbolic if and only if loops in $X$ satisfy a linear isoperimetric inequality (see \cite{BH}, chapter $III.H$, page $417$ and page $419$).
For loops in arbitrary CAT(0) spaces, however, there is a quadratic isoperimetric inequality (see \cite{BH}, chapter $III.H$, page $414$).
Osajda introduced in \cite{O-8loc} a local combinatorial condition called $8$-location implying Gromov hyperbolicity of the universal cover (see \cite{L-8loc}).
A related curvature condition, called $5/9$-condition, also implies Gromov hyperbolicity (see \cite{L-8loc2}).
Both $8$-located complexes and $5/9$-complexes satisfy therefore, under the additional hypothesis of simply connectedness, a linear isoperimetric inequality.

One can also express curvature using a condition called local $k$-largeness which was introduced independently by Chepoi \cite{Ch} (under the name of bridged complexes) and by Januszkiewicz-Swiatkowski \cite{JS1}.
A flag simplicial complex is locally $k$-large if its links do not contain essential loops of length less than $k, k \ge 4$.
Cycles in systolic complexes satisfy a quadratic isoperimetric inequality (see \cite{JS1}).
In \cite{E1} explicit constants are provided presenting the optimal estimate on the area of a systolic disc.
In systolic complexes the isoperimetric function for $2$-spherical cycles (the so called second isoperimetric function) is linear (see \cite{JS2}).
In \cite{ChaCHO} it is shown that meshed graphs (thus, in particular, weakly modular graphs) satisfy a quadratic isoperimetric inequality.

The purpose of the current paper is to extend some results obtained in \cite{L-t-unif} on $t$-uniform simplicial complexes.
We define $(s,t)$-uniform simplicial complexes, $s,t \geq 4$.
We take into account only the lengths of spheres, not of all loops.
We show that for $s,t \geq 6, t$ even except for $s=t=6$, there is a connection between the lengths of spheres $S_{n}^{s,t}$ in minimal filling diagrams associated to loops in $(s,t)$-uniform complexes and the terms of certain constant-recursive sequences.
For $s = t = 6$, the complex is $6$-uniform and therefore flat.
So it fulfills a quadratic isoperimetric inequality.
Besides we find other simplices on spheres that satisfy the same recurrence relation (see Theorem $3.1$).
For the hyperbolic case, we find the limit of the ratio $\cfrac{A_{n}^{s,t}}{|S_{n}^{s,t}|}$ as $n \rightarrow \infty$.
In contrast to $t$-uniform complexes (see \cite{L-t-unif}), for $(s,t)$-uniform complexes this limit can no longer be used as the best constant for the isoperimetric inequality.
We also study the average Gaussian curvature for vertices inside spheres as the radii of spheres grow.

\textbf{Acknowledgements}.
The author would like to thank Damian Osajda for introducing her to the subject.
This work was partially supported by the grant $346300$ for IMPAN from the Simons Foundation and the matching $2015-2019$ Polish MNiSW fund.

\section{Preliminaries}

\subsection{Simplicial complexes}

Let $X$ be a simplicial complex.
We denote by $X^{(k)}$ the $k$-skeleton of $X, 0 \le k < \dim X$.
A subcomplex $L$ in $X$ is called \emph{full} as a subcomplex of $X$ if any simplex of $X$ spanned by a set of vertices in $L$, is a simplex of $L$.
For a set $A = \{ v_{1}, ..., v_{k} \}$ of vertices of $X$, by $\langle A \rangle$ or by $\langle v_{1}, ..., v_{k} \rangle$ we denote the \emph{span} of $A$, i.e. the smallest full subcomplex of $X$ that contains $A$.
We write $v \sim v'$ if $\langle v,v' \rangle \in X$ (it can happen that $v = v'$).
We write $v \nsim v'$ if $\langle v,v' \rangle \notin X$.
We call $X$ {\it flag} if any finite set of vertices which are pairwise connected by edges of $X$, spans a simplex of $X$.

A {\it cycle} ({\it loop}) $\gamma$ in $X$ is a subcomplex of $X$ isomorphic to a triangulation of $S^{1}$.
A \emph{full cycle} in $X$ is a cycle that is full as a subcomplex of $X$.
A $k$-\emph{wheel} in $X$ $(v_{0}; v_{1}, ..., v_{k})$ (where $v_{i}, i \in \{0,..., k\}$ are vertices of $X$) is a subcomplex of $X$ such that $(v_{1}, ..., v_{k})$ is a full cycle and $v_{0} \sim v_{1}, ..., v_{k}$.
The \emph{length} of $\gamma$ (denoted by $|\gamma|$) is the number of edges of $\gamma$.

We define the \emph{metric} on the $0$-skeleton of $X$ as the number of edges in the shortest $1$-skeleton path joining two given vertices.

Let $\sigma$ be a simplex of $X$.
The \emph{link} of $X$ at $\sigma$, denoted $X_{\sigma}$, is the subcomplex of $X$ consisting of all simplices of $X$ which are disjoint from $\sigma$ and which, together with $\sigma$, span a simplex of $X$.
We call a flag simplicial complex \emph{k-large} if there are no full $j$-cycles in $X$, for $j < k$.
We say $X$ is \emph{locally k-large} if all its links are $k$-large.
We call a vertex of $X$ \emph{k-large} if its link is $k$-large.

\begin{definition}\label{def:simplicial-map}
A \emph{simplicial map} $f : X \rightarrow Y$ between simplicial complexes $X$ and $Y$ is a map which sends vertices to vertices, and whenever vertices $v_{0}, ..., v_{k} \in X$ span a simplex $\sigma$ of $X$ then their images span a simplex $\tau$ of $Y$ and we have $f(\sigma) = \tau$.
Therefore a simplicial map is determined by its values on the vertex set of $X$.
A simplicial map is \emph{nondegenerate} if it is injective on each simplex.
\end{definition}

\begin{definition}\label{def:filling-diagram}
Let $\gamma$ be a cycle in $X$.
A \emph{filling diagram} for $\gamma$ is a simplicial map $f : D \rightarrow X$ where $D$ is a triangulated $2$-disc, and $f | _{\partial D}$ maps $\partial D$ isomorphically onto $\gamma$.
We denote a filling diagram for $\gamma$ by $(D,f)$ and we say it is
\begin{itemize}
    \item \emph{minimal} if $D$ has minimal area (it consists of the least possible number of $2$-simplices among filling diagrams for $\gamma$);
    \item \emph{nondegenerate} if $f$ is a nondegenerate map;
    \item \emph{locally k-large} if D is a locally k-large simplicial complex.
\end{itemize}
\end{definition}

\begin{lemma}\label{lemma:filling-diagram}
Let $X$ be a simplicial complex and let $\gamma$ be a homotopically trivial loop in $X$.
Then:
\begin{enumerate}
    \item there exists a filling diagram $(D, f)$ for $\gamma$ (see \cite{Ch} - Lemma $5.1$, \cite{JS1} - Lemma $1.6$ and \cite{Pr} - Theorem $2.7$);
    \item any minimal filling diagram for $\gamma$ is simplicial and nondegenerate (see \cite{Ch} - Lemma $5.1$, \cite{JS1} - Lemma $1.6$, Lemma $1.7$ and \cite{Pr} - Theorem $2.7$).
\end{enumerate}
\end{lemma}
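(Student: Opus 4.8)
Since both assertions are essentially classical — they are the content of the cited lemmas of Chepoi, Januszkiewicz--\'Swi\k{a}tkowski and Przytycki — the plan is to indicate the arguments in outline rather than to grind through every case check, treating the two parts in turn.

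\emph{Part (1).} Because $\pi_1$ depends only on the $2$-skeleton, $\gamma$ is already null-homotopic in $X^{(2)}$, so there is a continuous map $g \colon D^2 \to |X|$ from the standard $2$-disc whose restriction to $\partial D^2$ parametrizes $|\gamma|$. I would triangulate $D^2$ finely, arranging the induced boundary triangulation to refine the triangulation of $S^1$ underlying $\gamma$, and then apply the simplicial approximation theorem rel $\partial D^2$ — the boundary already mapping simplicially onto $\gamma \subset X$ — to obtain a simplicial map into $X$. Collapsing the boundary subdivision back to $\gamma$ (equivalently: building the diagram directly by stacking the elementary homotopies, i.e.\ face subdivisions and diagonal exchanges, that witness triviality of $\gamma$ in $\pi_1(X^{(2)})$) yields a simplicial $f \colon D \to X$ with $D$ a triangulated $2$-disc and $f|_{\partial D}$ an isomorphism onto $\gamma$, as demanded by Definition~\ref{def:filling-diagram}. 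The only point needing care is keeping $\partial D$ untouched throughout, so that $f|_{\partial D}$ is an honest isomorphism and not merely a subdivision.

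\emph{Part (2).} Let $(D,f)$ be a minimal filling diagram for $\gamma$ and suppose, toward a contradiction, that $f$ is not nondegenerate in the sense of Definition~\ref{def:simplicial-map}, or that $D$ has two distinct simplices with the same vertex set. Since $f|_{\partial D}$ is an isomorphism onto the embedded cycle $\gamma$, every such defect is confined to the interior of $D$: it is either a \emph{collapsed triangle}, a $2$-simplex $\langle u,v,w\rangle$ of $D$ with, say, $f(u)=f(v)$, or a \emph{fold}, a pair of $2$-simplices sharing an edge $\langle u,v\rangle$ whose opposite vertices $w_1, w_2$ satisfy $f(w_1)=f(w_2)$. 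The key step is a local reduction move that deletes the defect and strictly decreases area while leaving $\gamma$ intact: in the first case one excises $\langle u,v,w\rangle$ and identifies $u$ with $v$, closing the exposed slit like a book; in the second case one excises both $2$-simplices and identifies $w_1$ with $w_2$. One verifies that the result is again a triangulated $2$-disc filling $\gamma$ with fewer $2$-simplices, contradicting minimality. Hence no defect exists, so $f$ is nondegenerate and $D$ is a genuine simplicial complex.

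The main obstacle I anticipate is exactly the bookkeeping inside the reduction move. One must enumerate all ways a degeneracy can arise — collapsed triangles, folds, and their variants when an offending edge or vertex abuts $\partial D$ — and check in each case that the prescribed excision-and-identification again produces a \emph{disc} (no spurious pinch points, correct boundary cycle, consistent Euler characteristic) together with a \emph{strict} drop in the number of $2$-simplices; when a naive edge contraction fails the link condition one must substitute a slightly different move that exploits the short noncontractible loop it produces. Assembling these into a finite, manifestly terminating list of moves is where the real work lies; everything else is either a citation or a routine application of simplicial approximation.
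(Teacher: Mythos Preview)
The paper does not supply its own proof of this lemma: the statement is given with inline citations to \cite{Ch}, \cite{JS1}, and \cite{Pr}, and no proof environment follows. Your outline accurately reflects the standard arguments found in those references --- simplicial approximation for existence, and local area-reducing moves (collapsing degenerate triangles, unfolding identified adjacent triangles) for nondegeneracy of minimal diagrams --- so there is nothing to compare beyond noting that your sketch is consistent with the cited sources the paper defers to.
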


Let $D$ be a simplicial disc.
We denote by $C$ the cycle bounding $D$ and by $\rm{Area} C$ the area of $D$.
We denote by $V_{i}$ and $V_{b}$ the numbers of internal and boundary vertices of $D$, respectively.
Then: $\rm{Area} C = 2 V_{i} + V_{b} - 2 = |C| + 2 (V_{i} - 1)$ (Pick's formula).
In particular, the area of a simplicial disc depends only on the number of its internal and boundary vertices.

We call \emph{$s$-polygon} a polygon which has $s$ edges.
We call \emph{$s$-vertex} a vertex which has $s$ neighbours.
Note that in a disc $s$-vertices are interior vertices.
We call \emph{$(s,t)$-edge} an edge spanned by an $s$-vertex and a $t$-vertex.

\begin{definition}\label{def:t-uniform}
Let $X$ be a flag, simply connected simplicial complex and let $\gamma$ be a loop in $X$.
We call $X$ \emph{(s,t)-uniform}, $s, t \ge 4$, if the complex fulfills one of the following conditions:
\begin{enumerate}
    \item if $s = t$ then $X$ is $t$-uniform;
    \item if $s \neq t$ then for any minimal filling diagram $(D,f)$ for $\gamma$, any interior vertex $v$ of $D$ is either an $s$-vertex or a $t$-vertex; moreover, 
    \begin{enumerate}
        \item if $v$ is an $s$-vertex, its neighbours are either $t$-vertices or boundary vertices,
        \item if $v$ is a $t$-vertex, its neighbours  $v_{i}$, $1 \leq i \leq t$, form a full cycle $(v_1, ..., v_t)$ such that the vertices at odd indices are $t$-vertices or boundary vertices, and the vertices at even indices are $s$-vertices or boundary vertices.
    \end{enumerate}
\end{enumerate}
\end{definition}

For $s = t$, the $s$-vertices coincide with the $t$-vertices.
Then the difference between an $s$-vertex and a $t$-vertex consists only in the role it plays.
Namely, such vertex is either an interior or a boundary vertex of a polygon.

If $t$ is even, we can construct a surface tiling using $s$-polygons.
Namely, we start with an $s$-polygon and we connect $t/2$ such polygons at each vertex.
If $t \ge 6$, one can continue this process for all boundary vertices for as long as we want.
The center vertices of the $s$-polygons have $s$ neighbours.
So they are $s$-vertices.
Because the vertices at the intersection of $t/2$ $s$-polygons have $2 \cdot t/2 = t$ triangles around them, such vertices are $t$-vertices.
There is no triangle in this construction such that all its vertices are $t$-vertices.
A minimal filling diagram is, by definition, a section of such tiling.
Note that such tiling is unique.

If $t$ is odd, each $t$-vertex may have a single pair of adjacent neighbours which are also $t$-vertices.
We can construct a surface tiling as above except that in this case we can connect only $(t-1)/2$ such polygons at each vertex.
This implies that at each $t$-vertex there is an extra triangle.
All vertices of this extra triangle are $t$-vertices (i.e. the extra triangle is not included in an $s$-polygon).
Because these extra triangles can be connected in several ways to the $s$-polygons (either to a vertex or to an edge), there are several ways to construct such tilings.
Only in case $s = t$, we obtain the same tiling no matter how we consider the extra triangles.

Based on the explanations above, we continue the analysis only for the two cases with unique tiling.
The first case is when $t$ is even; the second case is when $s = t$ odd.

Let $X$ be a $(s,t)$-uniform simplicial complex and let $\gamma$ be a loop in $X$.
Let $(D,f)$ be a minimal filling diagram for $\gamma$.
Let $v$ be an interior vertex of $D$.
We call the \textit{sphere} centered at $v$ of radius $n$ the set of edges spanned by the vertices at distance $n$ from $v$, $n \ge 0$.
We denote it by $S_n^{s,t}$.
We call the \textit{area} of a sphere the number of triangles inside the sphere.
We denote it by $A_{n}^{s,t}$.
We call the \textit{length} of a sphere the number of edges on the sphere.
We denote it by $|S_n^{s,t}|$.

If $Z$ is a set of simplices, we denote by $|Z|$ the number of simplices in the set.

\subsection{Homogeneous linear recurrence relations}

A homogeneous linear recurrence relation of order $d$ with constant coefficients $c_1$, $c_2$, $\dots$, $c_d$ is an equation of the form
$$x_n = c_1 x_{n-1} + c_2 x_{n-2} + \dots + c_d x_{n-d}$$

A constant-recursive sequence is a sequence satisfying a recurrence relation of this form.
The initial values $x_0$, $\dots$, $x_{d-1}$ can be taken to be any values but then the recurrence relation determines the sequence uniquely.

The characteristic equation of the recurrence relation for the constant-recursive sequence is
$$x^d - c_1 x^{d-1} - \dots - c_{d-1} x  - c_d = 0$$

If the roots $r_1$, $r_2$, $\dots$, $r_d$ of the characteristic equation are distinct, then each solution of the recurrence relation has the form
\begin{equation} \label{eq:rec-rel-general-term}
    x_n = \sum_{i=1}^{d} k_i r_i^n = k_1 r_1^n + k_2 r_2^n + \dots + k_d r_d^n
\end{equation}
The coefficients $k_i$ are determined in order to fit the initial conditions of the recurrence relation.
If the same roots occur multiple times, the terms in the formula above corresponding to the second and the later occurrences of the same root are multiplied by increasing powers of n.
Assume that there are $e$ distinct roots and that each of these roots, say $r_{i}$, occurs $p_i$ times (i.e. $\sum_{i=1}^{e} p_i = d$).
Then we have
\begin{equation}\label{eq:rec-rel-general-term-duplicate}
    x_n = \sum_{i=1}^{e} \sum_{j=1}^{p_i} k_{ij} \cdot n^{j-1} \cdot r_i^n
    = (k_{11} + k_{12} n + \dots + k_{1p_{1}} n^{p_1 - 1}) \cdot r_1^n + \dots
\end{equation}

\subsection{Combinatorial curvature}

According to Descartes' theorem, the total curvature of a polyhedron is $4\pi$.
The curvature at a vertex is the angle defect of the vertex.
At edges and faces, there is zero curvature.

We study the curvature inside loops of a simplicial complex $X$.
Let $\gamma$ be a homotopically trivial loop of $X$.
We consider a minimal filling diagram $(D,f)$ for $\gamma$.
We compute the curvature for the triangulated surface $D$.
Similar to polyhedral curvature, we measure curvature only at vertices.

If a vertex has $6$ neighbours, then it has no angle defect and no angle excess; it has zero curvature.
If a vertex has less than $6$ neighbours, then it has an angle defect; it has positive curvature.
If a vertex has more than $6$ neighbours, then it has an angle excess; it has negative curvature.
For simplicity, we consider the curvature of a $t$-vertex to be $6-t$, instead at $(6-t) \cdot \cfrac{\pi}{3}$.

Below we give the combinatorial version of the Gauss-Bonnet Theorem.

\begin{theorem}[Combinatorial Gauss-Bonnet]
Let S be a compact triangulated surface and let $v$ be an interior vertex of $S$.
Let $\chi(S)$ denote the Euler characteristic of $S$.
Let $\chi(v)$ denote the number of triangles containing the vertex $v$.
Then the following formula holds
\begin{equation}\label{eq:gauss-bonnet}
    6 \chi(S) = \sum_{v \in \partial S} (3 - \chi(v)) + \sum_{v \in int S} (6 - \chi(v))
\end{equation}
\end{theorem}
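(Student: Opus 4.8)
The plan is to prove the identity by relating the face--vertex and face--edge incidences of the triangulation of $S$ to its Euler characteristic, so it is essentially a double-counting argument. Write $V$, $E$, $F$ for the numbers of vertices, edges and triangles of $S$, so that $\chi(S) = V - E + F$. Partition the vertices as $V = V_i + V_b$ according to whether they lie in the interior or on $\partial S$, and partition the edges as $E = E_i + E_b$, where $E_b$ is the number of edges contained in $\partial S$.

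First I would record the two elementary incidence counts. Counting pairs consisting of a triangle together with one of its vertices in two ways gives
\[
3F = \sum_{v} \chi(v) = \sum_{v \in \partial S} \chi(v) + \sum_{v \in \mathrm{int}\,S} \chi(v),
\]
the sum being over all vertices of $S$. Counting pairs consisting of a triangle together with one of its edges, and using that each interior edge lies in exactly two triangles while each boundary edge lies in exactly one triangle, gives $3F = 2 E_i + E_b$.

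Next I would invoke the fact that $\partial S$, being the boundary of a compact triangulated surface, is a disjoint union of triangulated circles; hence it has equally many vertices and edges, $V_b = E_b$. Combining this with $3F = 2 E_i + E_b$ and $E = E_i + E_b$ yields the relation $2E = 3F + V_b$.

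It then remains to expand the right-hand side of the claimed formula:
\[
\sum_{v \in \partial S}(3 - \chi(v)) + \sum_{v \in \mathrm{int}\,S}(6 - \chi(v)) = 3 V_b + 6 V_i - \sum_{v} \chi(v) = 3 V_b + 6 V_i - 3F .
\]
Substituting $V_i = V - V_b$ together with the relation $2E = 3F + V_b$ obtained above, a one-line computation rewrites the last expression as $6(V - E + F) = 6\chi(S)$, which is the desired equality; the case $\partial S = \emptyset$ is included automatically since then $V_b = E_b = 0$. The whole argument is bookkeeping, and the only step that genuinely uses a hypothesis is the identity $V_b = E_b$, which relies on $S$ being an honest compact triangulated surface so that its boundary is a $1$-manifold triangulated as a union of cycles, rather than an arbitrary graph; this is the point I would expect to be the main (if modest) obstacle to a fully rigorous write-up.
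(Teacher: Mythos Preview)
Your proof is correct; the double-counting of face--vertex and face--edge incidences together with $V_b = E_b$ is exactly the standard argument, and your final substitution checks out: $3V_b + 6V_i - 3F = 6V - 3V_b - 3F = 6V - 3(2E - 3F) - 3F = 6\chi(S)$.

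The paper itself does not give a proof of this theorem: it is stated in the preliminaries as a known background result and used without justification, so there is no proof in the paper to compare your argument against.
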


The first term of the right side of (\ref{eq:gauss-bonnet}) is called the \emph{geodesic curvature} (i.e. the curvature along a curve).
It is denoted by $k_g$
$$k_g(\partial S) = \sum_{v \in \partial S} k_g(v) = \sum_{v \in \partial S} (3 - \chi(v))$$
The second term of the right side of (\ref{eq:gauss-bonnet}) is called the \emph{Gaussian curvature} (i.e. the curvature inside a curve).
It is denoted by $K$
$$K(S) = \sum_{v \in int S} K(v) = \sum_{v \in int S} (6 - \chi(v))$$
So formula (\ref{eq:gauss-bonnet}) becomes
\begin{equation}\label{eq:gauss-bonnet-short}
    6 \chi(S) = k_g(\partial S) + K(S)
\end{equation}
Because a disc $D$ has the Euler characteristic $\chi(D) = 1$, for $S = D$ relation (\ref{eq:gauss-bonnet-short}) becomes
\begin{equation}\label{eq:gauss-bonnet-short-disk}
    6 = k_g(\partial D) + K(D)
\end{equation}
The formula above also holds for any non self-intersecting loop inside $D$.

\section{Constant-recursive sequences in (s,t)-uniform simplicial complexes}
 
Let $X$ be an $(s,t)$-uniform simplicial complex $X$.
In general we consider $s, t \ge 6$, $t$ even, except for $s = t = 6$.
In section \ref{sec:examples} we give examples of $(s,t)$-uniform simplicial complexes.
In section \ref{sec:recurrence-s-t-unif-connection} we find a connection between the lengths of spheres in $(s,t)$-uniform simplicial complexes and the terms of certain constant-recursive sequences (Theorem \ref{theorem:sphere-recurrence}).
We take into account spheres $S_n^{s,t}$ belonging to the disc of a minimal filling diagram associated to a loop of $X$.
In section \ref{sec:recurrences-s-t-unif} we analyze concrete constant-recursive sequences for $(s,t)$-uniform simplicial complexes.
Besides we find the limit of the ratio $\cfrac{A_n^{s,t}}{|S_n^{s,t}|}$ as $n$ goes to infinity (Theorem \ref{theorem:area-length-ratio-s-t-unif}).
In section \ref{sec:curvature-inside-spheres} we study the average Gaussian curvature for vertices inside spheres as the radii of spheres grow.

\subsection{Examples}\label{sec:examples}

We start by giving a few examples of $(s,t)$-uniform simplicial complexes.
We present two diagrams for each such complex.
In each diagram spheres are centered at a vertex $v$.
On the left side $v$ is an $s$-vertex and we denote a sphere by $S_n^{s,t}$.
On the right side $v$ is a $t$-vertex and we denote a sphere by $T_n^{s,t}$.
In each case we represent spheres up to radius $4$.
In the rest of the paper we refer to spheres as $S_n^{s,t}$, no matter the type of vertex the sphere is centered at.

\begin{figure}[ht]
    \centering
    \begin{subfigure}[b]{.5\textwidth}
        \centering
        \includegraphics[width=.95\textwidth]{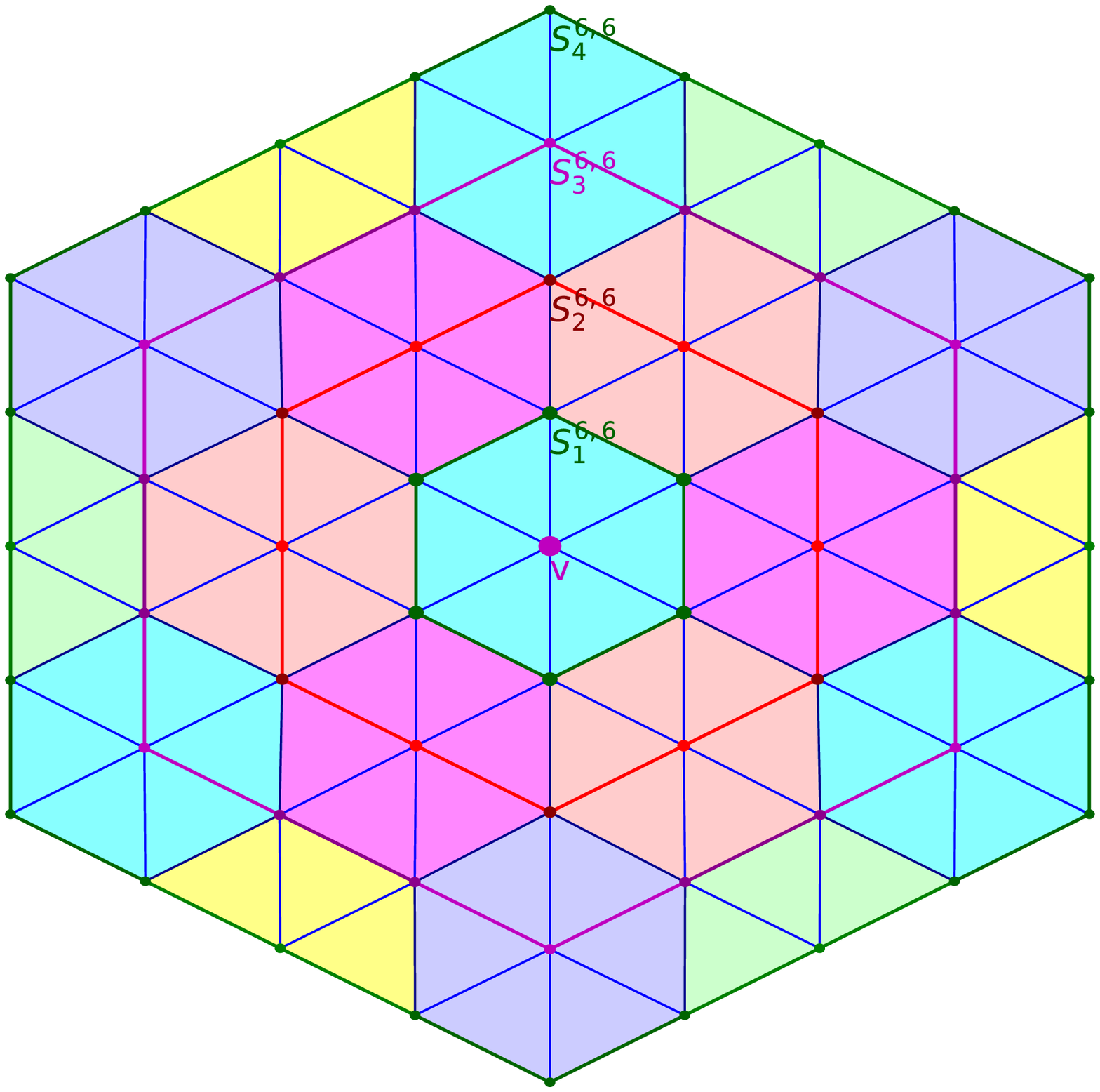}
        %\caption{$v$ is an $s$-vertex}
    \end{subfigure}%
    \begin{subfigure}[b]{.5\textwidth}
        \centering
        \includegraphics[width=.95\textwidth]{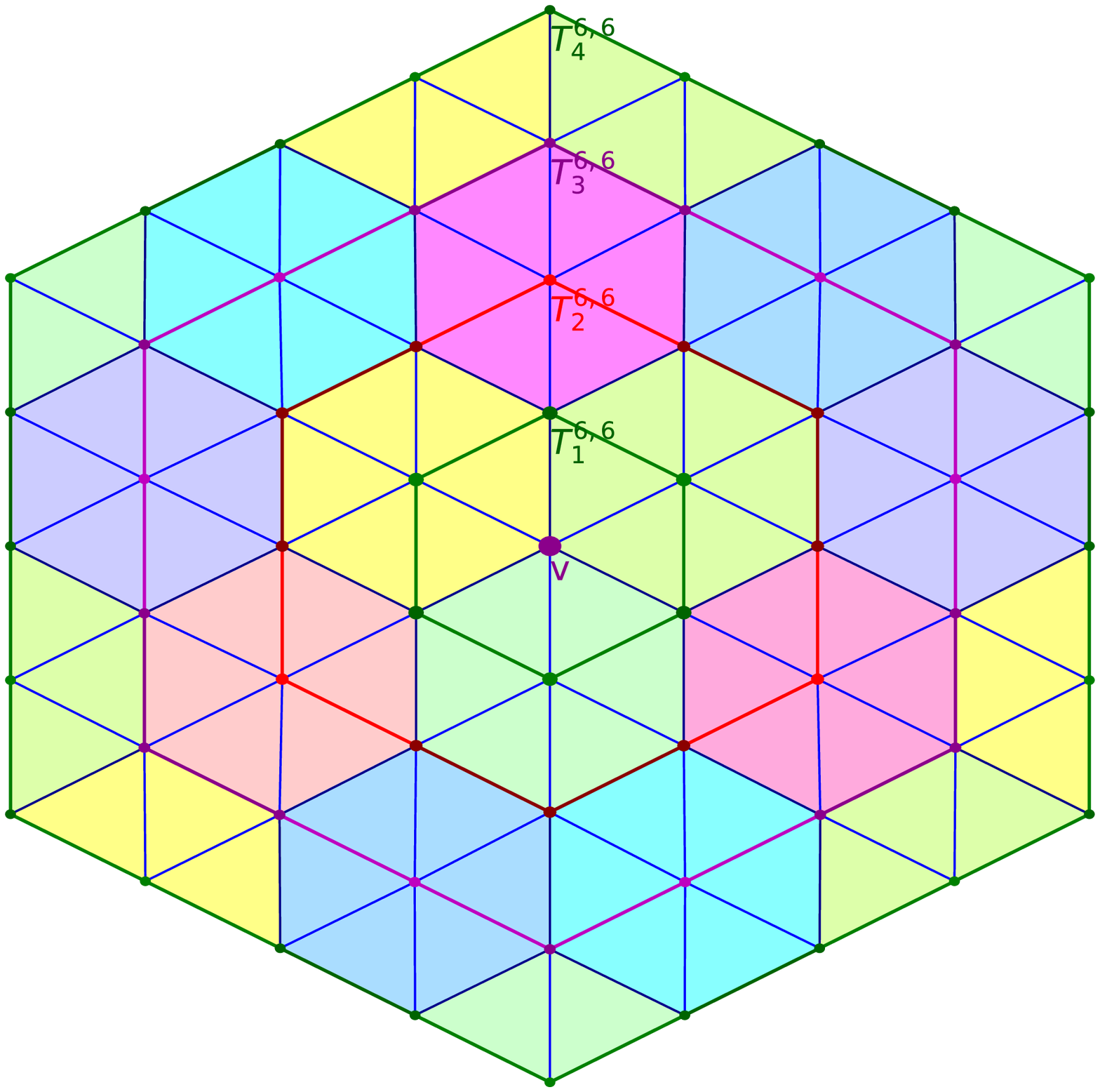}
        %\caption{$v$ is a $t$-vertex}
    \end{subfigure}%
    \caption{Diagrams of a $(6,6)$-uniform simplicial complex}
    \label{fig:6-6-unif}
\end{figure}

We color the interior of each $s$-polygon with  different colors.
This outlines the type of vertex and edge on each sphere.
The parameter $t$ is even in all examples with the exception of Figure \ref{fig:7-7-unif} where $s=t=7$.
This is an example of $7$-uniform complex.
In Figure \ref{fig:7-7-unif} we color in gray the triangles that do not belong to any $s$-polygon at all.

\begin{figure}[p]
    \centering
    \begin{subfigure}[b]{.5\textwidth}
        \centering
        \includegraphics[width=.95\textwidth]{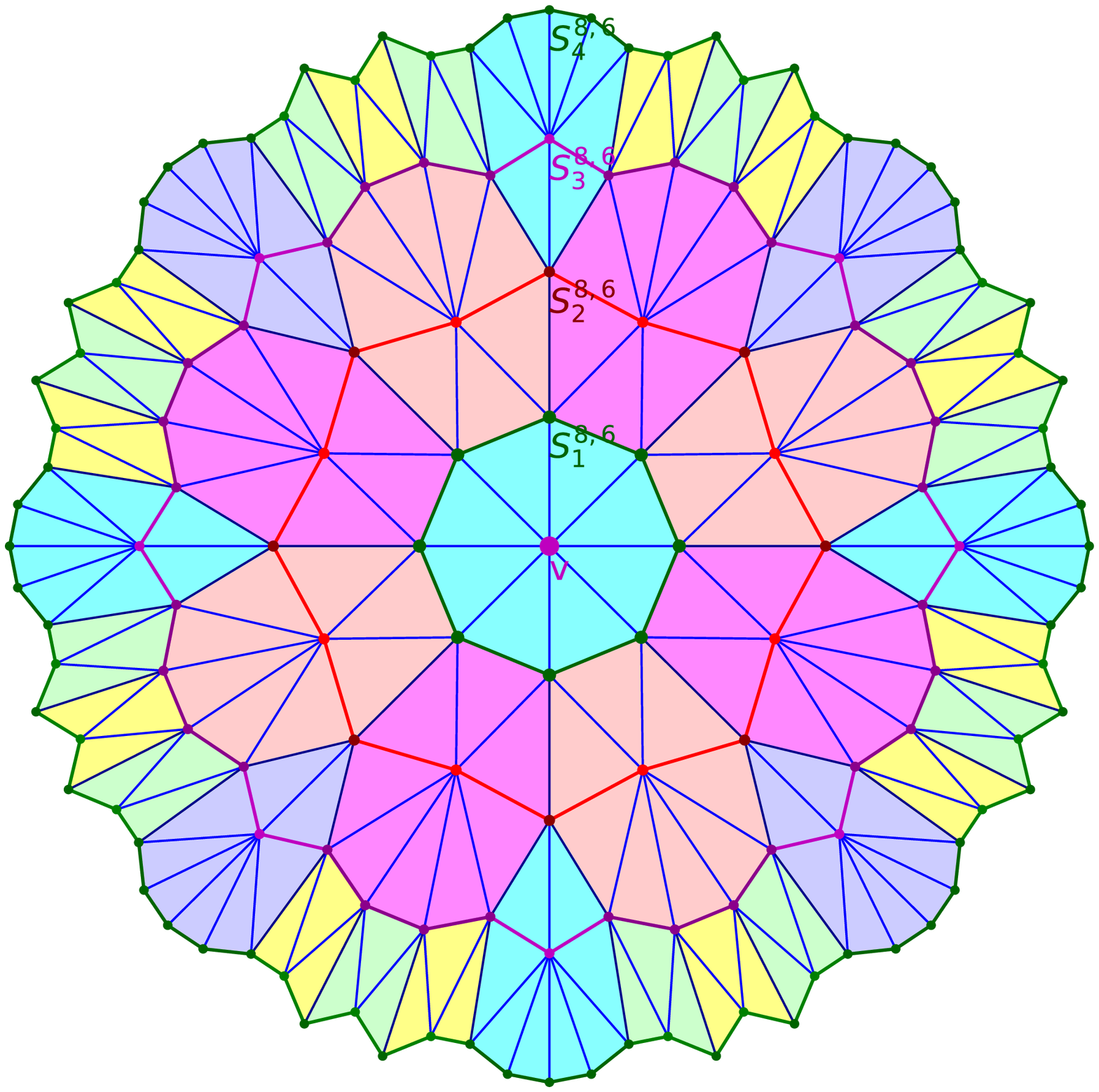}
        %\caption{$v$ is an $s$-vertex}
    \end{subfigure}%
    \begin{subfigure}[b]{.5\textwidth}
        \centering
        \includegraphics[width=.95\textwidth]{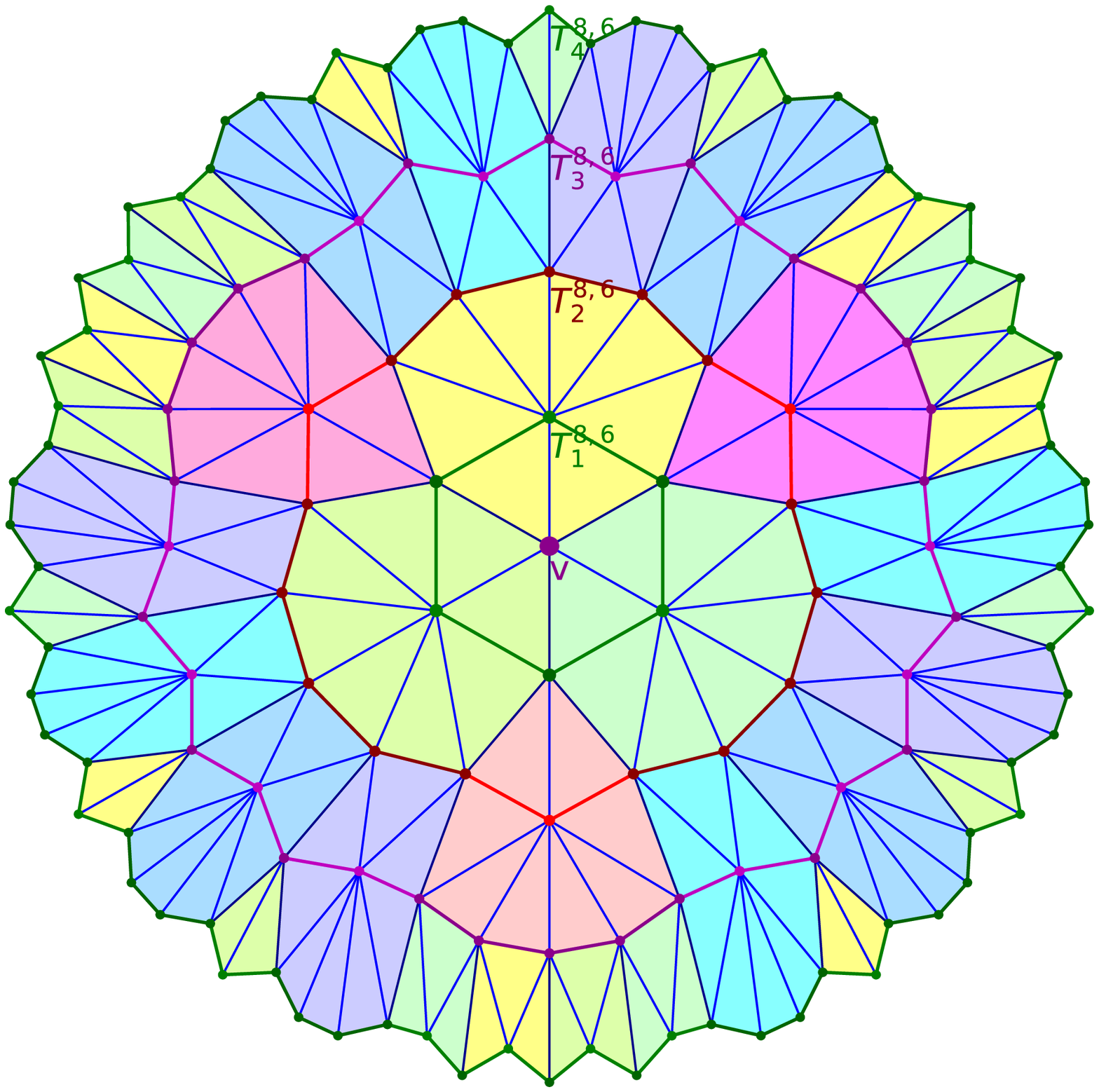}
        %\caption{$v$ is a $t$-vertex}
    \end{subfigure}%
    \caption{Diagrams of an $(8,6)$-uniform simplicial complex}
    \label{fig:8-6-unif}
%\end{figure}

%\begin{figure}[ht]
    \centering
    \begin{subfigure}[b]{.5\textwidth}
        \centering
        \includegraphics[width=.95\textwidth]{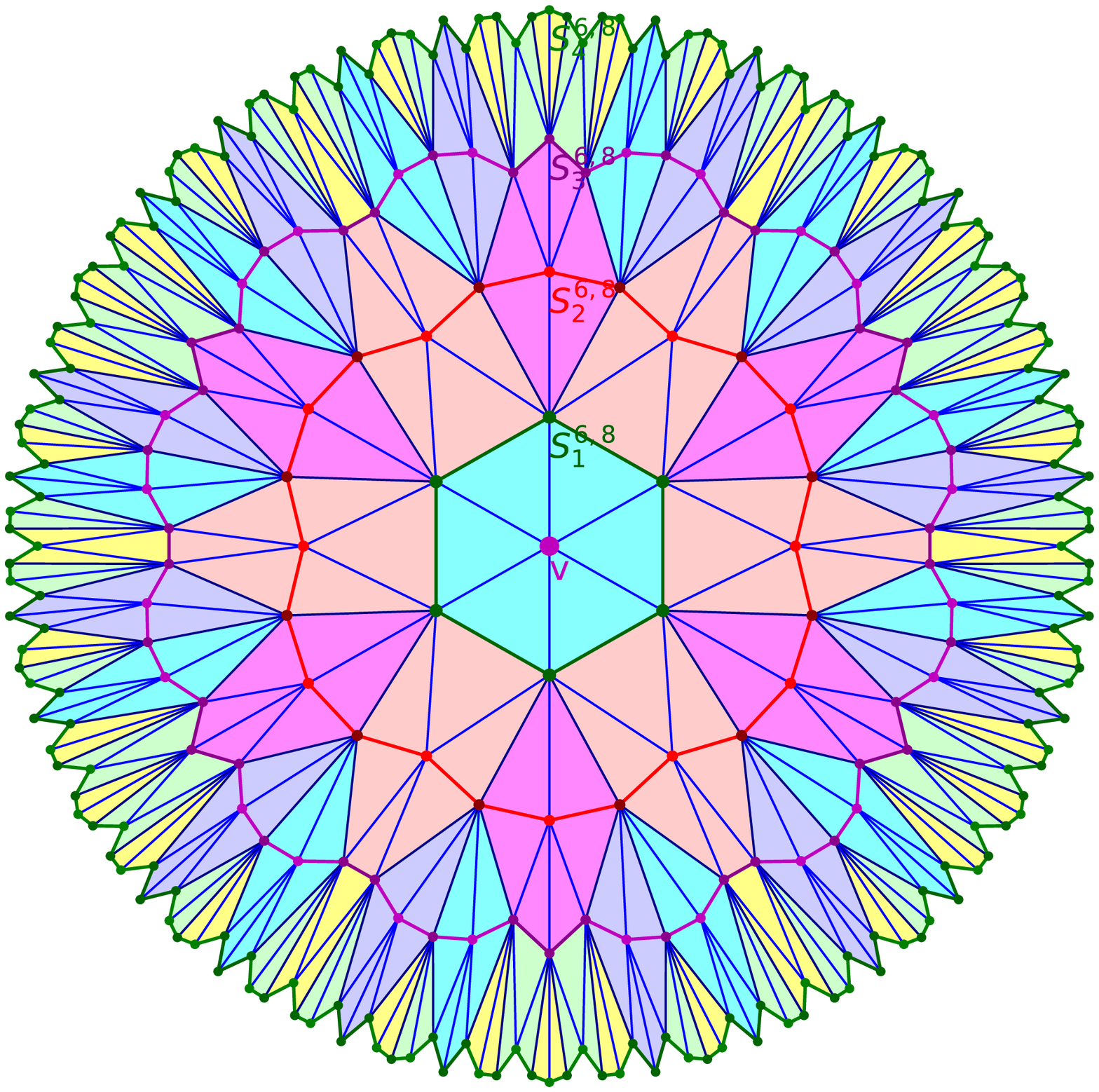}
        %\caption{$v$ is an $s$-vertex}
    \end{subfigure}%
    \begin{subfigure}[b]{.5\textwidth}
        \centering
        \includegraphics[width=.95\textwidth]{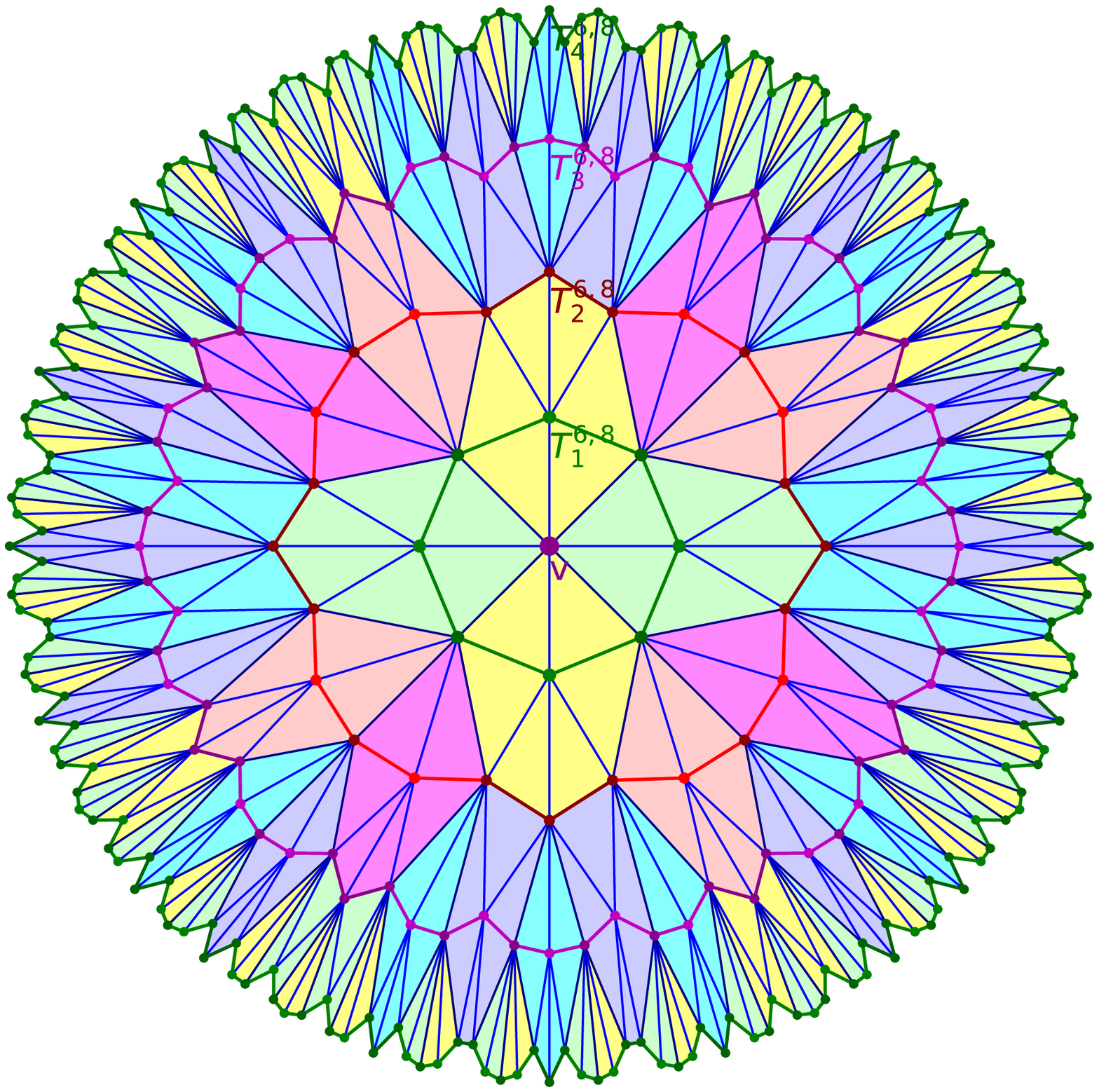}
        %\caption{$v$ is a $t$-vertex}
    \end{subfigure}%
    \caption{Diagrams of a $(6,8)$-uniform simplicial complex}
    \label{fig:6-8-unif}
%\end{figure}

%\begin{figure}[ht]
    \centering
    \begin{subfigure}[b]{.5\textwidth}
        \centering
        \includegraphics[width=.95\textwidth]{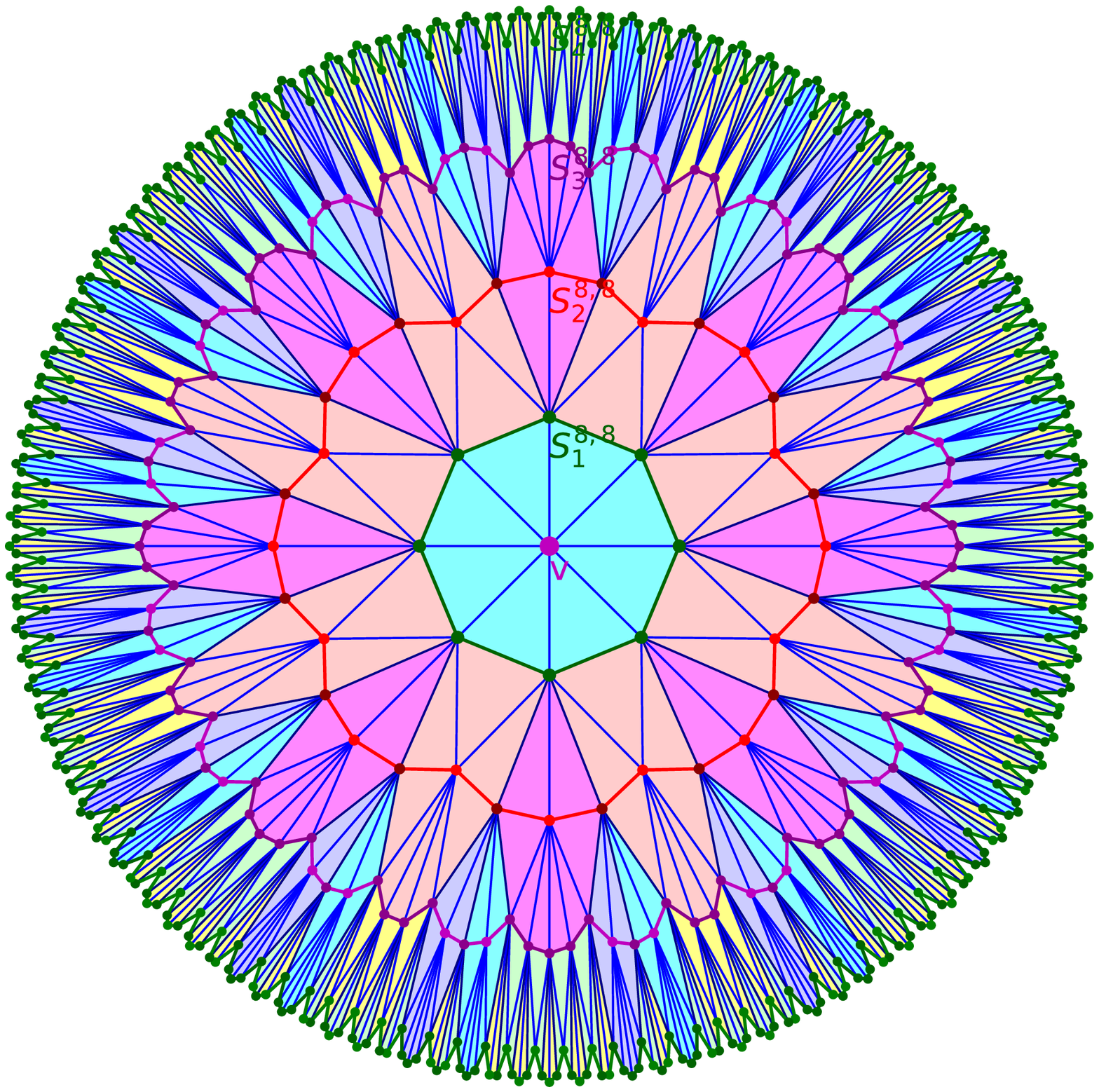}
        %\caption{$v$ is an $s$-vertex}
    \end{subfigure}%
    \begin{subfigure}[b]{.5\textwidth}
        \centering
        \includegraphics[width=.95\textwidth]{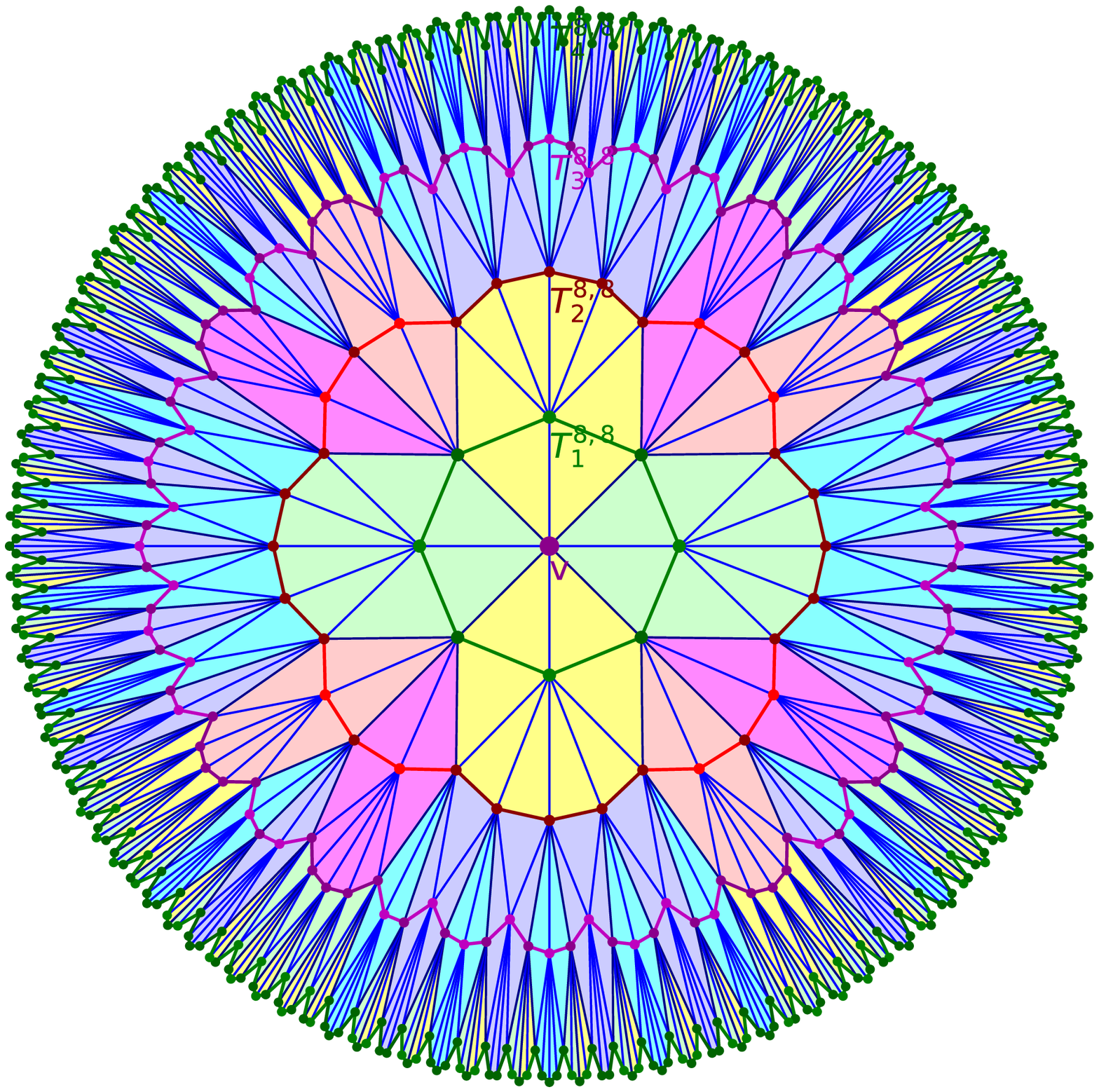}
        %\caption{$v$ is a $t$-vertex}
    \end{subfigure}%
    \caption{Diagrams of an $(8,8)$-uniform simplicial complex}
    \label{fig:8-8-unif}
\end{figure}

\begin{figure}[p]
    \centering
    \begin{subfigure}[b]{.5\textwidth}
        \centering
        \includegraphics[width=.95\textwidth]{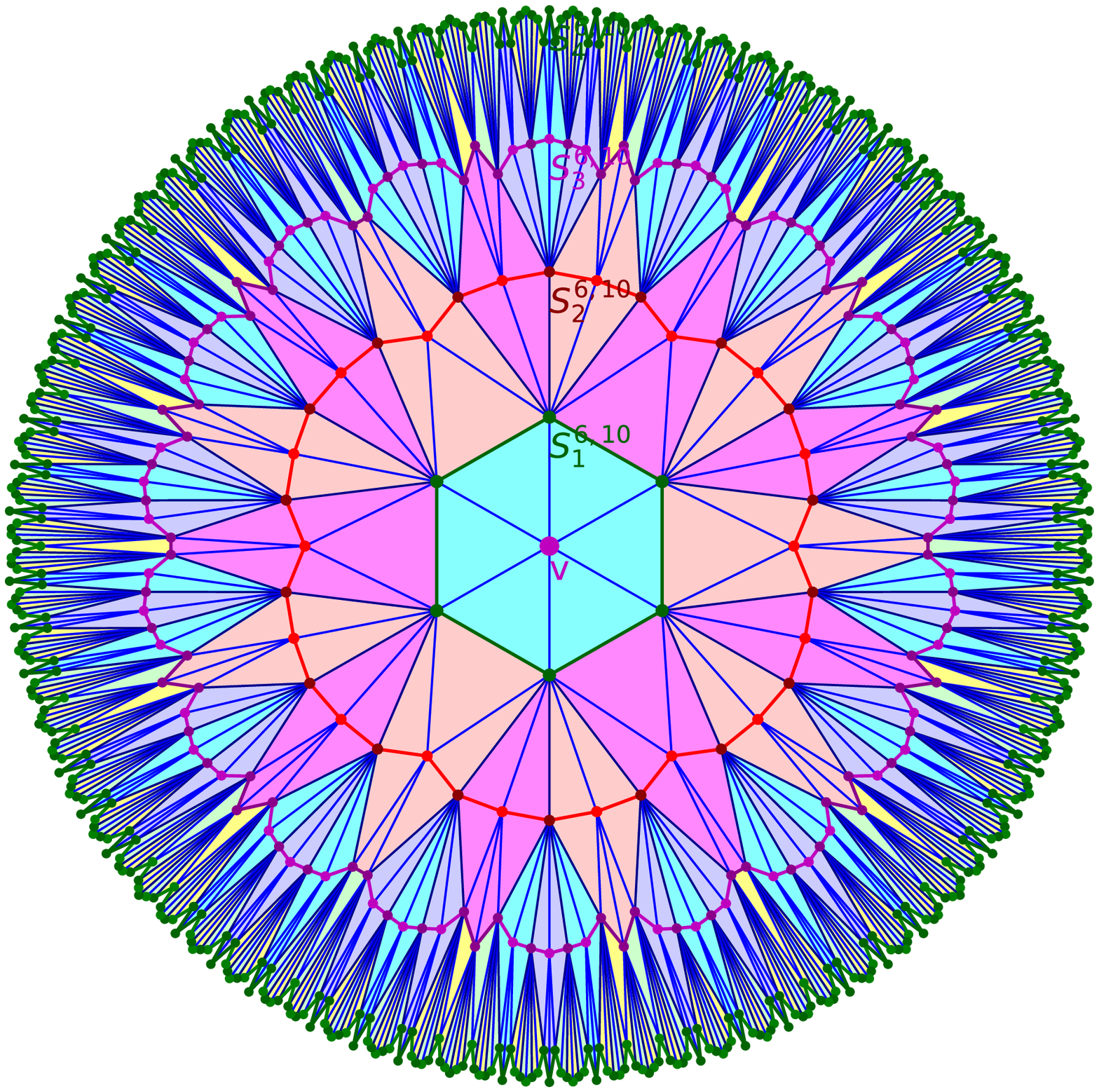}
        %\caption{$v$ is an $s$-vertex}
    \end{subfigure}%
    \begin{subfigure}[b]{.5\textwidth}
        \centering
        \includegraphics[width=.95\textwidth]{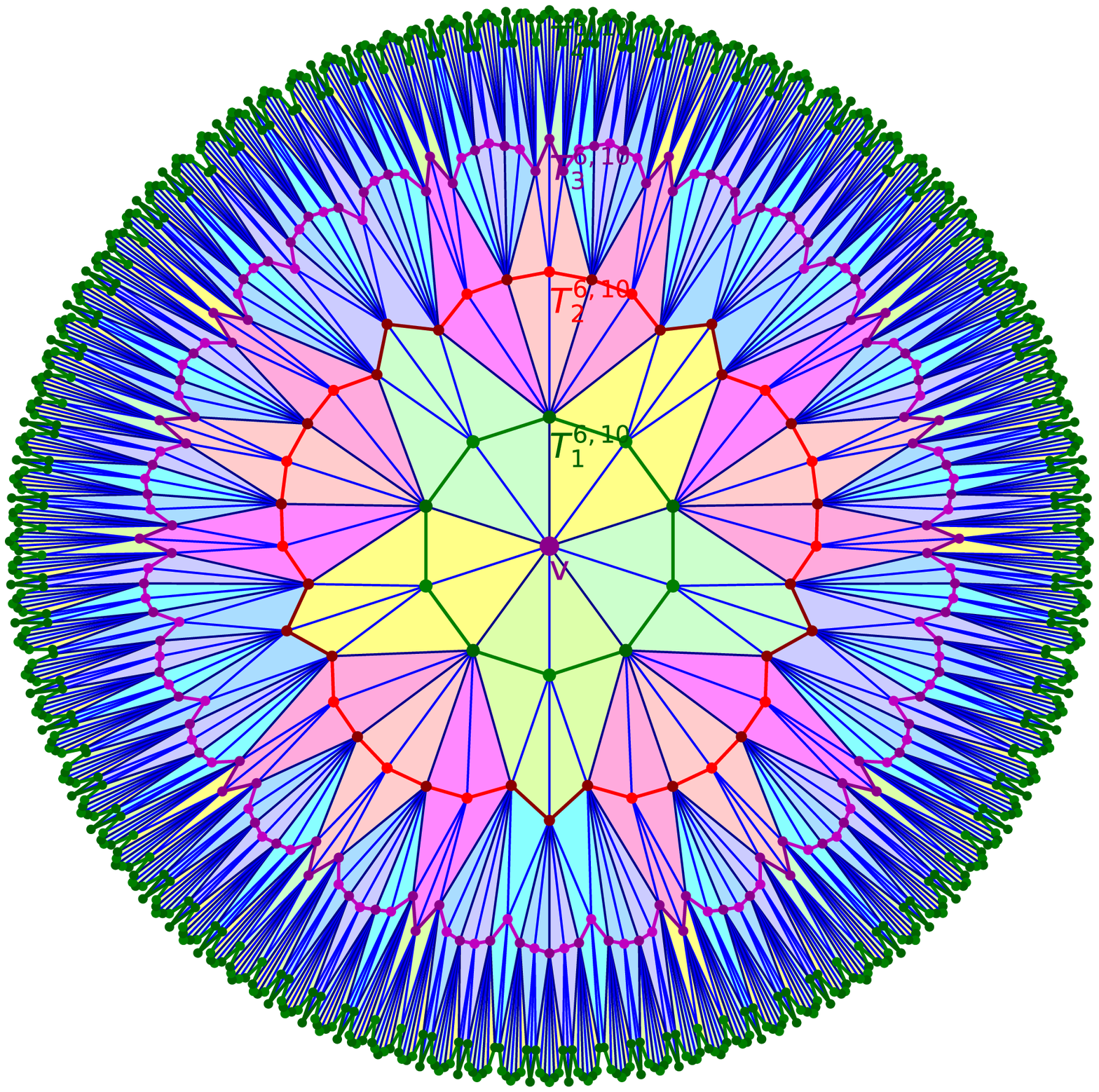}
        %\caption{$v$ is a $t$-vertex}
    \end{subfigure}%
    \caption{Diagrams of a $(6,10)$-uniform simplicial complex}
    \label{fig:6-10-unif}
%\end{figure}

%\begin{figure}[ht]
    \centering
    \begin{subfigure}[b]{.5\textwidth}
        \centering
        \includegraphics[width=.95\textwidth]{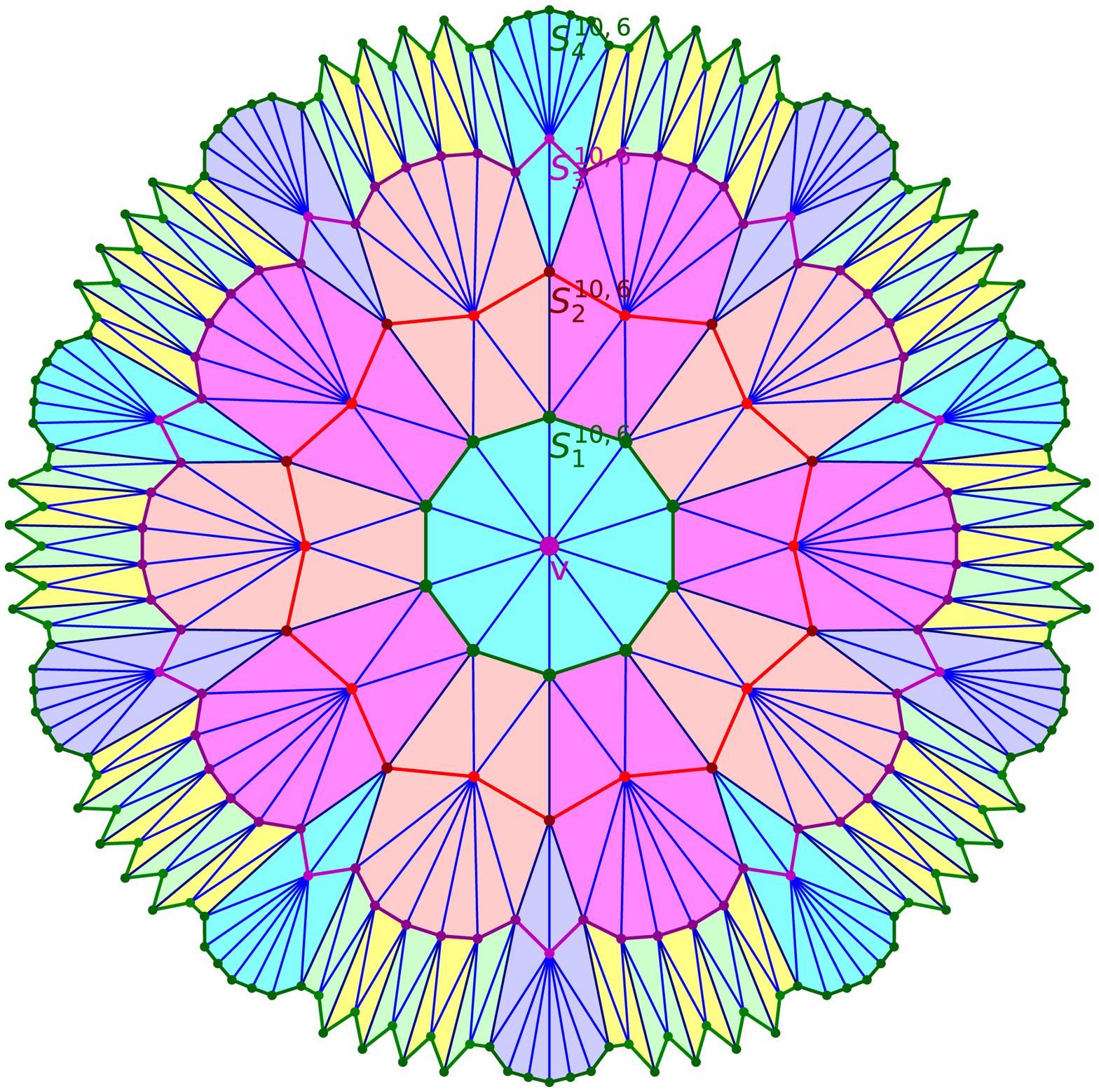}
        %\caption{$v$ is an $s$-vertex}
    \end{subfigure}%
    \begin{subfigure}[b]{.5\textwidth}
        \centering
        \includegraphics[width=.95\textwidth]{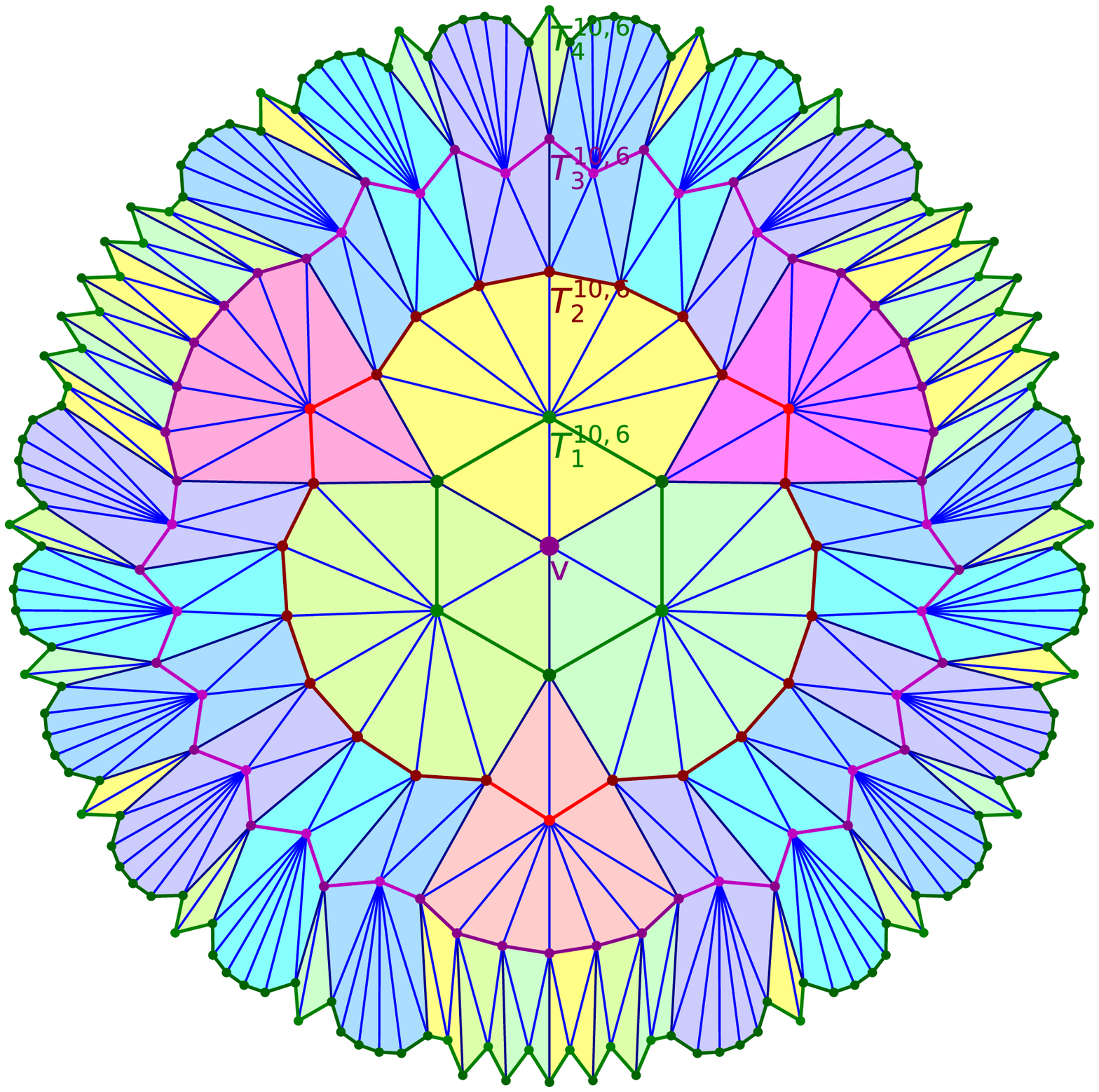}
        %\caption{$v$ is a $t$-vertex}
    \end{subfigure}%
    \caption{Diagrams of a $(10,6)$-uniform simplicial complex}
    \label{fig:10-6-unif}
%\end{figure}

%\begin{figure}[ht]
    \centering
    \begin{subfigure}[b]{.5\textwidth}
        \centering
        \includegraphics[width=.95\textwidth]{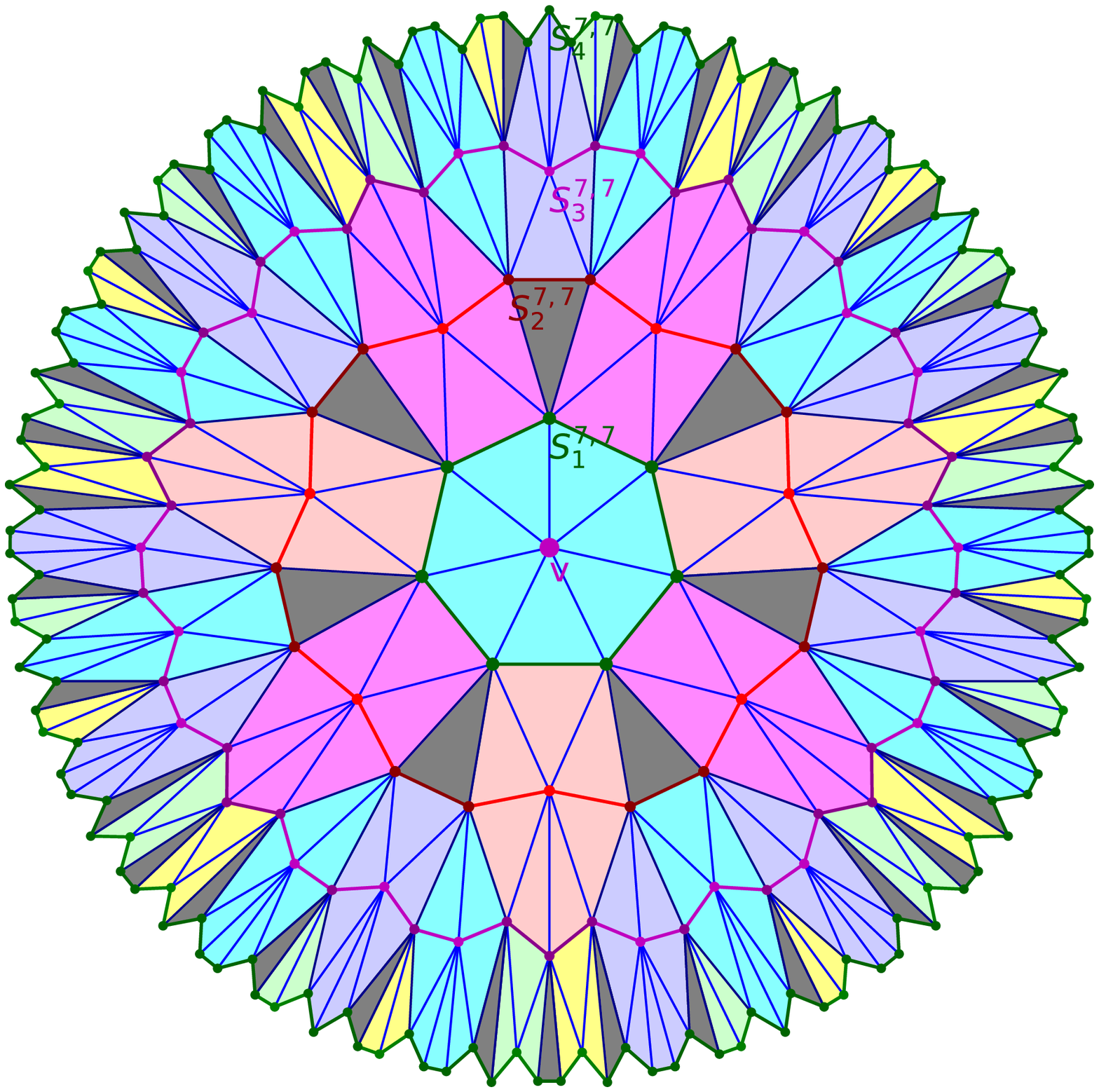}
        %\caption{$v$ is an $s$-vertex}
    \end{subfigure}%
    \begin{subfigure}[b]{.5\textwidth}
        \centering
        \includegraphics[width=.95\textwidth]{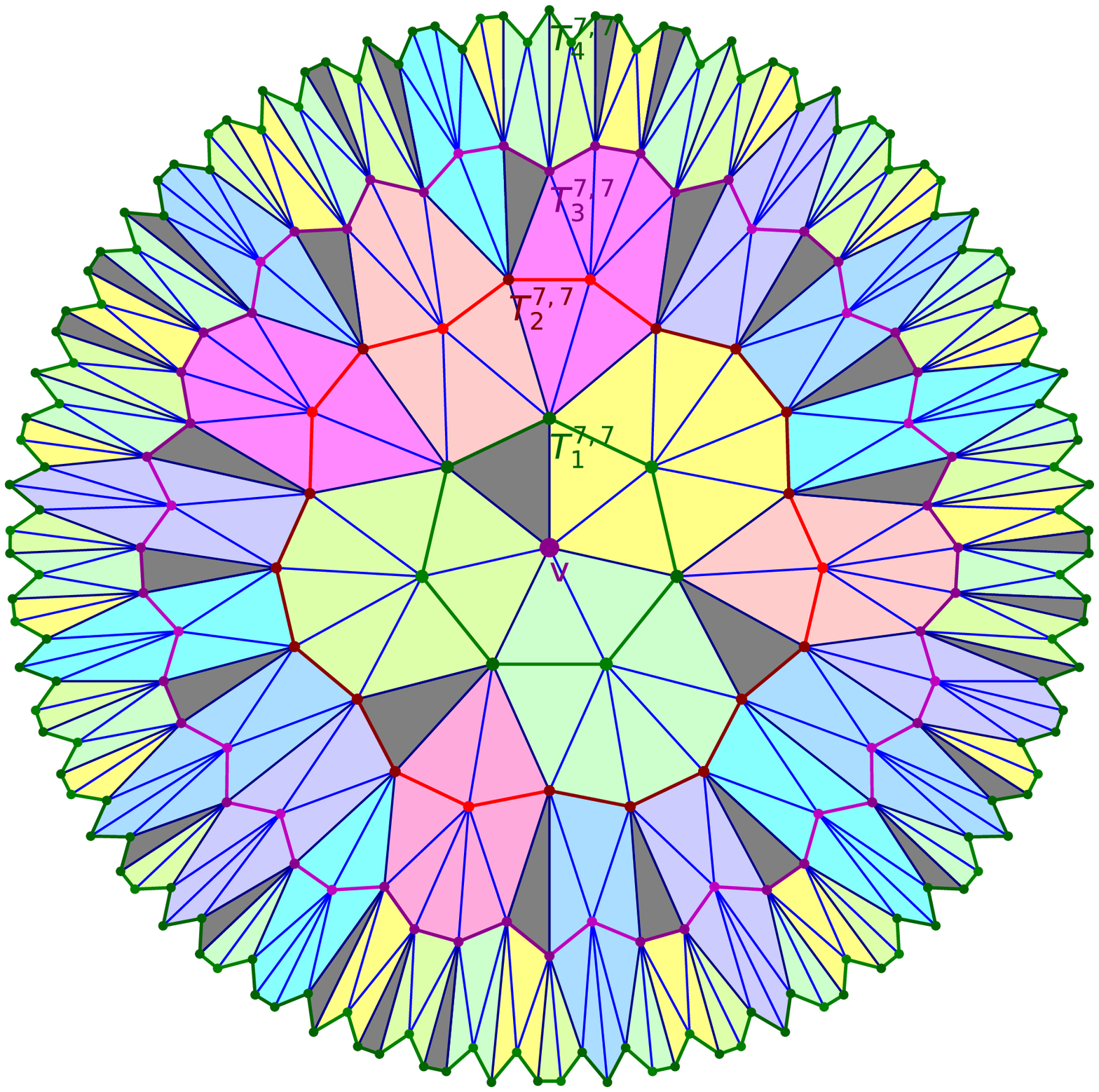}
        %\caption{$v$ is a $t$-vertex}
    \end{subfigure}%
    \caption{Diagrams of a $(7,7)$-uniform simplicial complex}
    \label{fig:7-7-unif}
\end{figure}

\subsection{Recurrence relations in (s,t)-uniform simplicial complexes}\label{sec:recurrence-s-t-unif-connection}

We denote by $V_n^{s,t}$ the set of $s$-vertices on a sphere $S_n^{s,t}$.
We denote by $W_n^{s,t}$ the set of $t$-vertices on $S_n^{s,t}$.
We denote by $E_n^{s,t}$ the set of $(s,t)$-edges on $S_n^{s,t}$.
We denote by $F_n^{s,t}$ the set of $(t,t)$-edges on $S_n^{s,t}$.

We consider only vertices "connected to edges on the same sphere".
This is necessary to avoid counting the center vertex of $S_0^{s,t}$ (which is a sphere centered at a vertex).
The sphere $S_0^{s,t}$ has length $0$.
Because $S_0^{s,t}$ contains no edges, it also contains no vertices.

Some quick observations lead to the following relations:
\begin{equation}\label{eq:rec-spheres-svw}
    |S_n^{s,t}| = |V_n^{s,t}| + |W_n^{s,t}|,
\end{equation}
\begin{equation}\label{eq:rec-spheres-sef}
    |S_n^{s,t}| = |E_n^{s,t}| + |F_n^{s,t}|,
\end{equation}
\begin{equation}\label{eq:rec-spheres-ve}
    |E_n^{s,t}| = 2 \cdot |V_n^{s,t}|
\end{equation}
The last relation holds because an $s$-vertex has only $t$-vertices as neighbours.
So an $s$-vertex on a sphere is adjacent to two $t$-vertices which lie on the same sphere.
For each $s$-vertex there are therefore on a sphere two $(s,t)$-edges.

We start by establishing a connection between the lengths of spheres in $(s,t)$-uniform simplicial complexes and the terms of certain constant-recursive sequences.
Besides we find other simplices on spheres that satisfy the same recurrence relation.

\begin{theorem}\label{theorem:sphere-recurrence}
Let $X$ be an $(s,t)$-uniform simplicial complex.
Let $\gamma$ be a loop of $X$ and let $(D,f)$ be a minimal filling diagram for $\gamma$.
Let $v$ be an interior vertex of $D$.
Then the lengths of spheres centered at $v$ are sequences that satisfy a certain recurrence relation with parameters depending on $T = t-4$ and $S = s-4$.
Namely,
\begin{equation}\label{eq:rec-spheres}
    |S_n^{s,t}| = \cfrac{T}{2} \cdot |S_{n-1}^{s,t}| + \bigg(\cfrac{S \cdot T}{2}-2\bigg) \cdot |S_{n-2}^{s,t}| + \cfrac{T}{2} \cdot |S_{n-3}^{s,t}| - |S_{n-4}^{s,t}|
\end{equation}
Moreover, the number of the following simplices satisfy the same recurrence relation:
\begin{itemize}
    \item the $s$-vertices ($|V_n^{s,t}|$),
    \item the $t$-vertices ($|W_n^{s,t}|$),
    \item the $(s,t)$-edges ($|E_n^{s,t}|$),
    \item the $(t,t)$-edges ($|F_n^{s,t}|$).
\end{itemize}
\end{theorem}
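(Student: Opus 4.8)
The plan is to derive two coupled linear recurrences for the vertex counts on the spheres and then eliminate one of them. Throughout I use that, $(D,f)$ being minimal, $D$ is simplicial and nondegenerate (Lemma~\ref{lemma:filling-diagram}) and is a section of the unique $(s,t)$-tiling; hence every ball $B_n$ (the subdisc of $D$ spanned by the vertices at distance $\le n$ from $v$) is a locally convex subdisc, each sphere $S_n^{s,t}$ is a chordless cycle, and for every vertex $u$ the neighbours of $u$ lying in $B_n$ form one arc in the cyclic order around $u$. Write $g_n$ for the number of edges of $D$ joining $S_{n-1}^{s,t}$ to $S_n^{s,t}$. First I would apply (\ref{eq:gauss-bonnet-short}) to the closed annulus between $S_{n-1}^{s,t}$ and $S_n^{s,t}$, which has Euler characteristic $0$ and, since distances are integers, no interior vertex; evaluating the geodesic curvature along its two boundary cycles with the arc property gives $0=(2\,|S_{n-1}^{s,t}|-g_n)+(2\,|S_n^{s,t}|-g_n)$, i.e.\ $g_n=|S_{n-1}^{s,t}|+|S_n^{s,t}|$, equivalently $A_n^{s,t}-A_{n-1}^{s,t}=|S_{n-1}^{s,t}|+|S_n^{s,t}|$.

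Next I would apply (\ref{eq:gauss-bonnet-short-disk}) to $B_n$ itself. Every interior vertex of $B_n$ is at distance $\le n-1$ from $v$, so all its $D$-triangles lie in $B_n$; it therefore contributes combinatorial curvature $6-s=2-S$ if it is an $s$-vertex and $6-t=2-T$ if it is a $t$-vertex. For $u\in S_n^{s,t}$ the arc property gives exactly $1+\mathrm{down}(u)$ triangles of $B_n$ at $u$, where $\mathrm{down}(u)$ is the number of neighbours of $u$ on $S_{n-1}^{s,t}$, so $k_g(S_n^{s,t})=2\,|S_n^{s,t}|-g_n$. Subtracting the instances of (\ref{eq:gauss-bonnet-short-disk}) for $B_n$ and $B_{n-1}$ (the curvature of $v$ cancels), inserting $g_n=|S_{n-1}^{s,t}|+|S_n^{s,t}|$ and $|W_{n-1}^{s,t}|=|S_{n-1}^{s,t}|-|V_{n-1}^{s,t}|$ from (\ref{eq:rec-spheres-svw}), and simplifying, I obtain
\begin{equation}\label{eq:first-rec-SV}
|S_n^{s,t}|=T\,|S_{n-1}^{s,t}|-|S_{n-2}^{s,t}|+(S-T)\,|V_{n-1}^{s,t}|.
\end{equation}

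It then remains to produce a second recurrence for $|V_n^{s,t}|$, the number of $s$-polygons whose centre lies on $S_n^{s,t}$, and to eliminate $|V^{s,t}|$. This is the step that genuinely uses Definition~\ref{def:t-uniform}: I would track, polygon by polygon, how the boundary vertices of the $s$-polygons centred on $S_{n-2}^{s,t}$, $S_{n-1}^{s,t}$ and $S_n^{s,t}$ split among the three spheres each one meets, using that the neighbours of an $s$-vertex are $t$-vertices (or boundary vertices) and that the two types alternate around every $t$-vertex. Counting the $t$-vertices of $S_{n-1}^{s,t}$ that open a new $s$-polygon centred on $S_n^{s,t}$ and correcting for the $t$-vertices shared by two such polygons — these corrections are what pull in $S_{n-2}^{s,t}$, since an $s$-polygon spans three consecutive spheres — should yield a relation among $|V_n^{s,t}|,|V_{n-1}^{s,t}|,|V_{n-2}^{s,t}|$ and $|S_{n-1}^{s,t}|,|S_{n-2}^{s,t}|$; the coefficient $T/2=(t-4)/2$ enters here, which is where the hypothesis that $t$ be even is used. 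Solving (\ref{eq:first-rec-SV}) for $|V_{n-1}^{s,t}|$ and substituting into this second relation eliminates every $|V^{s,t}|$-term, and a direct computation checks that the coefficients collapse to those of (\ref{eq:rec-spheres}).

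Finally, for the last assertion: (\ref{eq:rec-spheres-svw})--(\ref{eq:rec-spheres-ve}) give $|W_n^{s,t}|=|S_n^{s,t}|-|V_n^{s,t}|$, $|E_n^{s,t}|=2\,|V_n^{s,t}|$ and $|F_n^{s,t}|=|S_n^{s,t}|-2\,|V_n^{s,t}|$, so each of the five sequences is a constant-coefficient linear combination of $(|S_n^{s,t}|)$ and $(|V_n^{s,t}|)$; the elimination above shows $(|V_n^{s,t}|)$ satisfies (\ref{eq:rec-spheres}) as well (the common characteristic quartic), and sequences satisfying a fixed linear recurrence form a vector space, so all four of $|V_n^{s,t}|,|W_n^{s,t}|,|E_n^{s,t}|,|F_n^{s,t}|$ satisfy (\ref{eq:rec-spheres}). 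I would then check $n=0,1,2,3$ directly from the local pictures of section~\ref{sec:examples} (these depend on whether $v$ is an $s$- or a $t$-vertex, while the relation for $n\ge4$ does not), which also shows the identity holds for every $n$ for which $S_n^{s,t}$ is disjoint from $\partial D$. The genuinely delicate part is the second recurrence: enumerating without double-counting the $t$-vertices created on $S_n^{s,t}$ by the polygons centred on $S_{n-1}^{s,t}$, and separating the cases forced by the alternation condition of Definition~\ref{def:t-uniform}, especially where two consecutive $s$-polygons meet along a sphere; the remaining steps are Gauss--Bonnet book-keeping and linear algebra.
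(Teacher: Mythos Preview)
Your Gauss--Bonnet derivation of the first relation is correct and is a genuinely different route from the paper. The paper obtains the same identity
\[
|S_n^{s,t}| \;=\; S\,|V_{n-1}^{s,t}| + T\,|W_{n-1}^{s,t}| - |S_{n-2}^{s,t}|
\]
(which is exactly your (\ref{eq:first-rec-SV}) after substituting $|W_{n-1}|=|S_{n-1}|-|V_{n-1}|$) by a direct vertex count: it splits $S_{n-1}$ into the vertices with two neighbours on $S_{n-2}$ and those with one, and tallies outward neighbours. Your annulus/ball argument replaces that combinatorics by two applications of (\ref{eq:gauss-bonnet-short}); this is cleaner and makes the role of the curvatures $2-S$, $2-T$ transparent, at the cost of importing the arc property for links along spheres.

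The gap is the second recurrence, which you only sketch. The paper's explicit form is
\[
\frac{T}{2}\,|W_{n-1}^{s,t}| \;=\; |V_n^{s,t}| + |V_{n-2}^{s,t}|,
\]
and it is proved by an assignment argument: to each $t$-vertex $w$ on $S_{n-1}$ one attaches exactly $T/2=\tfrac{t}{2}-2$ distinct $s$-vertices lying on $S_n\cup S_{n-2}$, via a four-case analysis according to whether the two sphere-neighbours of $w$ are $s$- or $t$-vertices (preceded by a lemma that any run of consecutive $t$-vertices on $S_{n-1}$ shares a single $s$-neighbour on $S_{n-2}$). Your polygon-by-polygon plan is in the right spirit, but without actually producing this identity the elimination cannot be carried out, and the specific coefficient $T/2$ does not fall out of Gauss--Bonnet alone. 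Once both relations are in hand, the paper eliminates $V$ and $W$ by a chain of substitutions to reach (\ref{eq:rec-spheres}), and then repeats the manipulation to show $(|V_n^{s,t}|)$ satisfies the same order-$4$ recurrence; your vector-space remark for $|W|,|E|,|F|$ via (\ref{eq:rec-spheres-svw})--(\ref{eq:rec-spheres-ve}) matches the paper's final step exactly.
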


\begin{proof}
We start by proving this relation for $n \ge 2$
\begin{equation}\label{eq:rec-spheres-rel1}
    |S_n^{s,t}| = S \cdot |V_{n-1}^{s,t}| + T \cdot |W_{n-1}^{s,t}| - |S_{n-2}^{s,t}|
\end{equation}

The proof of this relation is similar to the one given in \cite{L-t-unif} (Theorem $3.1$).
We consider consecutive spheres of $D$ centered at the same vertex $v$.
We count the number of vertices on each sphere.
In general the length of a sphere $S_n^{s,t}$ equals the number of vertices on the sphere.
These vertices are at distance $n$ from the center of the sphere.
The sphere $S_0^{s,t}$, however, is an exception.
Although  the length of this sphere is $0$, the number of its vertices is equal to $1$.

Let $n \ge 2$.
In order to count the number of vertices on the sphere $S_n^{s,t}$, we split the vertices on $S_{n-1}^{s,t}$ into two sets:
\begin{itemize}
    \item the set $Y_{n-1}$ contains those vertices adjacent to two vertices on $S_{n-2}^{s,t}$ which are adjacent; let $Y_{n-1}^V = Y_{n-1} \cap V_{n-1}^{s,t}$ and let $Y_{n-1}^W = Y_{n-1} \cap W_{n-1}^{s,t}$;
    \item the set $Z_{n-1}$ contains those vertices adjacent to one vertex on $S_{n-2}^{s,t}$; let $Z_{n-1}^V = Z_{n-1} \cap V_{n-1}^{s,t}$ and let $Z_{n-1}^W = Z_{n-1} \cap W_{n-1}^{s,t}$.
\end{itemize}

The vertices from the set $Y_{n-1}$ are adjacent to two vertices on $S_{n-2}^{s,t}$, to two vertices on the same sphere $S_{n-1}^{s,t}$, and, because the complex is $(s,t)$-uniform, to other $S = s - 4$ or $T = t - 4$ vertices on the exterior sphere $S_n^{s,t}$.
The number of these vertices is equal to the number of edges on the sphere $S_{n-2}^{s,t}$.
We note that each vertex in $Y_{n-1}$ corresponds to an edge on $S_{n-2}^{s,t}$.
So $|Y_{n-1}| = |S_{n-2}^{s,t}|$.
The vertices from the set $Z_{n-1}$ are adjacent to one vertex on $S_{n-2}^{s,t}$, to two vertices on the same sphere $S_{n-1}^{s,t}$, and, because the complex is $(s,t)$-uniform, to other $S + 1 = s - 3$ or $T + 1 = t - 3$ vertices on the exterior sphere $S_n^{s,t}$.
Any two vertices spanning an edge on $S_{n-1}^{s,t}$ are adjacent to the same vertex on $S_n^{s,t}$.
Therefore, in order to obtain the number of vertices on $S_n^{s,t}$, we have to count one vertex less for each vertex on $S_{n-1}^{s,t}$.

In conclusion, for $n \ge 2$, the number of vertices on $S_n^{s,t}$ is equal to
$$|S_n^{s,t}| = (S - 1) \cdot |Y_{n-1}^V| + [(S + 1) - 1] \cdot |Z_{n-1}^V| +
(T - 1) \cdot |Y_{n-1}^W| + [(T + 1) - 1] \cdot |Z_{n-1}^W| =$$
$$= (S - 1) \cdot |Y_{n-1}^V| +
(T - 1) \cdot |Y_{n-1}^W| + S \cdot |Z_{n-1}^V| + T \cdot |Z_{n-1}^W| =$$
$$= S \cdot (|Y_{n-1}^V| + |Z_{n-1}^V|) +
T \cdot (|Y_{n-1}^W| + |Z_{n-1}^W|) - (|Y_{n-1}^V| + |Y_{n-1}^W|) =$$
$$= S \cdot |V_{n-1}^{s,t}| + T \cdot |W_{n-1}^{s,t}| - |Y_{n-1}| =$$
$$= S \cdot |V_{n-1}^{s,t}| + T \cdot |W_{n-1}^{s,t}| - |S_{n-2}^{s,t}|$$
This proves relation (\ref{eq:rec-spheres-rel1}).

Next, for $n \ge 2$, we prove that
\begin{equation}\label{eq:rec-spheres-rel2}
    \frac{T}{2} \cdot |W_{n-1}^{s,t}| = |V_{n}^{s,t}| + |V_{n-2}^{s,t}|
\end{equation}

The set $W_{n-1}^{s,t}$ contains the $t$-vertices on the sphere $S_{n-1}^{s,t}$.
Each $t$-vertex is adjacent to $\cfrac{t}{2} = \bigg(\cfrac{T}{2} + 2\bigg)$ $s$-vertices.
These $s$-vertices can be either on the same sphere $S_{n-1}^{s,t}$ or on $S_{n}^{s,t}$ or on $S_{n-2}^{s,t}$.
We show that we can assign to each $t$-vertex from $W_{n-1}^{s,t}$, $\bigg(\cfrac{t}{2} - 2\bigg) = \cfrac{T}{2}$ unique $s$-vertices on $S_{n}^{s,t}$ and on $S_{n-2}^{s,t}$.
This assignment is done such that all $s$-vertices on $S_{n}^{s,t}$ and on $S_{n-2}^{s,t}$ correspond to some $t$-vertex from $W_{n-1}^{s,t}$.
By unique we mean that each $s$-vertex on $S_{n}^{s,t}$ and on $S_{n-2}^{s,t}$ is assigned to a single $t$-vertex from $W_{n-1}^{s,t}$.

We start by showing that any group of adjacent $t$-vertices from $W_{n-1}^{s,t}$ is adjacent to the same $s$-vertex on $S_{n-2}^{s,t}$.
We prove this at first for a group of three $t$-vertices $t_{1}, t_{2}, t_{3}$.
Assume otherwise.
Then there is at least one $t$-vertex $w_{1}$ on $S_{n-2}^{s,t}$ adjacent to $t_{2}$ such that $v_{1} \sim w_{1} \sim v_{2}$ with $v_{1},v_{2}$ $s$-vertices on $S_{n-2}^{s,t}$.
Note that $w_{1}$ is adjacent to $t-3 \ge 3$ vertices on $S_{n-3}^{s,t}$.
Because $w_{1}$ is a $t$-vertex, one of these $t-3$ vertices, say $v_{1}$, is an $s$-vertex.
Since $v_{1}$ is adjacent to three vertices on $S_{n-3}^{s,t}$ and on $S_{n-2}^{s,t}$, it is adjacent to $s-3 \ge 3$ vertices on $S_{n-4}^{s,t}$.
By continuing this process, we note that this section of inner spheres does not get smaller.
This yields a contradiction, as the center of the spheres is a vertex.
Let $t_{1} \sim t_{2} \sim ... \sim t_{m}$, $m > 3$ be $t$-vertices from $W_{n-1}^{s,t}$.
As shown above $t_{1},t_{2},t_{3}$ are adjacent to the same $s$-vertex $v$ on $S_{n-2}^{s,t}$.
This implies that the vertices $t_{2},t_{3},t_{4}$ are also adjacent to $v$.
We repeat the argument for all $t_{i}, t_{i+1}, t_{i+2}$, $3 \leq i \leq m-2$.
So any group of adjacent $t$-vertices from $W_{n-1}^{s,t}$ is adjacent to the same $s$-vertex on $S_{n-2}^{s,t}$.

In the discussion below the $t$-vertices from $W_{n-1}^{s,t}$ are considered in the order they lie on $S_{n-1}^{s,t}$ from left to right.

Let $n = 2$.
The sphere $S_1^{s,t}$ centered at an $s$-vertex is a special case because all its vertices are $t$-vertices.
Then each such $t$-vertex is adjacent to the center $s$-vertex $v$.
Because $|S_0^{s,t}| = 0$, $v$ is not counted in $|S_0^{s,t}|$ and it is therefore not assigned to any $t$-vertex on $S_{1}^{s,t}$.
Each $t$-vertex $w$ from $W_1^{s,t}$ is adjacent to $\cfrac{t}{2}$ $s$-vertices on $S_{0}^{s,t}$, on $S_{1}^{s,t}$ and on $S_{2}^{s,t}$.
Out of these $\cfrac{t}{2}$ $s$-vertices, one is on  $S_0^{s,t}$, and two are on $S_2^{s,t}$ (denoted by $v_{1},v_{2}$ from left to right).
The $s$-vertices $v_{1},v_{2}$ are also adjacent to the neighbour $t$-vertices $w_{1},w_{2}$  of $w$ on $S_1^{s,t}$, $v_1 \sim w_1$, $v_2 \sim w_2$.
We assign $v_{1}$ to $w$.
We assign $v_{2}$ to $w_{2}$.
We also assign to $w$ its neighbor $s$-vertices on $S_2^{s,t}$ between $v_{1}$ and $v_{2}$.
So we have assigned to $w$, $\bigg(\cfrac{t}{2} - 2\bigg) = \cfrac{T}{2}$ $s$-vertices on $S_0^{s,t}$ and on $S_2^{s,t}$.

For the sphere $S_1^{s,t}$ centered at a $t$-vertex, all its $t$-vertices fit the first case below.

For $n > 2$, given a $t$-vertex $w$ from $W_{n-1}^{s,t}$, one of the following situations occurs.

\begin{enumerate}
    \item Assume $w$ is adjacent to two $s$-vertices on the same sphere $S_{n-1}^{s,t}$.
    Then $w$ is adjacent to $\bigg(\cfrac{t}{2}-2\bigg) = \cfrac{T}{2}$ $s$-vertices on $S_{n}^{s,t}$ and on $S_{n-2}^{s,t}$.
    Note that these $\cfrac{T}{2}$ $s$-vertices are not adjacent to other $t$-vertices from $W_{n-1}^{s,t}$.
    
    \item Assume $w$ is adjacent to the left to an $s$-vertex $v_1$ and to the right to a $t$-vertex $w_2$ on the same sphere $S_{n-1}^{s,t}$.
    Then $w$ is adjacent to $\bigg(\cfrac{t}{2}-1\bigg)$ $s$-vertices on $S_{n}^{s,t}$ and on $S_{n-2}^{s,t}$.
    Out of these $\bigg(\cfrac{t}{2}-1\bigg)$ $s$-vertices, $\bigg(\cfrac{t}{2}-3\bigg)$ $s$-vertices are adjacent only to $w$.
    So we assign them to $w$.
    There is an $s$-vertex $v_3$ on $S_{n}^{s,t}$ such that $v_3 \sim w,w_2$.
    There is an $s$-vertex $v_4$ on $S_{n-2}^{s,t}$ such that $v_4 \sim w,w_2$.
    We assign $v_3$ to $w$.
    We leave $v_4$ unassigned for now.
    Note that $w_2$ fits one of the cases below.
    So $\bigg(\cfrac{t}{2}-2\bigg) = \cfrac{T}{2}$ $s$-vertices on $S_{n-2}^{s,t}$ and on $S_{n}^{s,t}$ are assigned to $w$.
    
    \item Assume $w$ is adjacent to two $t$-vertices $w_1,w_2$ on the same sphere $S_{n-1}^{s,t}$ such that $w_1$ lies to the left of $w$, and $w_2$ lies to the right of $w$.
    Then $w$ is adjacent to $\cfrac{t}{2}$ $s$-vertices on $S_{n}^{s,t}$ and on $S_{n-2}^{s,t}$.
    Out of these $\cfrac{t}{2}$ $s$-vertices, $\bigg(\cfrac{t}{2}-3\bigg)$ $s$-vertices are adjacent only to $w$.
    So we assign them to $w$.
    There are two $s$-vertices $v_3$, $v_4$ on $S_{n}^{s,t}$ such that $v_3 \sim w,w_1$ and $v_4 \sim w,w_2$.
    There is an $s$-vertex $v_5$ on $S_{n-2}^{s,t}$ such that $v_5 \sim w_1,w,w_2$.
    We assign $v_4$ to $w$.
    We leave $v_5$ unassigned for now.
    Because $w_1$ fits this case or the case above, $v_3$ was already assigned to $w_1$.
    Note that $w_2$ fits this case or the case below.
    So $\bigg(\cfrac{t}{2}-2\bigg) = \cfrac{T}{2}$ $s$-vertices on $S_{n-2}^{s,t}$ and on $S_{n}^{s,t}$ are assigned to $w$.
    
    \item Assume $w$ is adjacent to the left to a $t$-vertex $w_1$ and to the right to an $s$-vertex $v_2$ on the same sphere $S_{n-1}^{s,t}$.
    Then $w$ is adjacent to $\bigg(\cfrac{t}{2}-1\bigg)$ $s$-vertices on $S_{n}^{s,t}$ and on $S_{n-2}^{s,t}$.
    Out of these $\bigg(\cfrac{t}{2}-1\bigg)$ $s$-vertices, $\bigg(\cfrac{t}{2}-3\bigg)$ $s$-vertices are adjacent only to $w$.
    So we assign them to $w$.
    There is an $s$-vertex $v_3$ on $S_{n}^{s,t}$ such that $v_3 \sim w,w_1$.
    There is an $s$-vertex $v_4$ on $S_{n-2}^{s,t}$ such that $v_4 \sim w,w_1$.
    We assign $v_4$ to $w$.
    Because $w_1$ fits the second or third case above, $v_3$ was already assigned to $w_1$.
    So $\bigg(\cfrac{t}{2}-2\bigg) = \cfrac{T}{2}$ $s$-vertices on $S_{n-2}^{s,t}$ and on $S_{n}^{s,t}$ are assigned to $w$.
\end{enumerate}

In conclusion to each $t$-vertex from $W_{n-1}^{s,t}$ correspond $\bigg(\cfrac{t}{2}-2\bigg) = \cfrac{T}{2}$ $s$-vertices on $S_{n}^{s,t}$ and on $S_{n-2}^{s,t}$.
This proves relation (\ref{eq:rec-spheres-rel2}).

Using (\ref{eq:rec-spheres-svw}), (\ref{eq:rec-spheres-rel1}) and (\ref{eq:rec-spheres-rel2}) we obtain the recurrence relation we want to prove for the sphere lengths.
Namely,

$$|S_n^{s,t}| = S \cdot |V_{n-1}^{s,t}| + T \cdot |W_{n-1}^{s,t}| - |S_{n-2}^{s,t}| =$$
$$= S \cdot |V_{n-1}^{s,t}| + T \cdot |W_{n-1}^{s,t}| - 2 \cdot |S_{n-2}^{s,t}| + S \cdot |V_{n-3}^{s,t}| + T \cdot |W_{n-3}^{s,t}| - |S_{n-4}^{s,t}|=$$
$$= S \cdot (|V_{n-1}^{s,t}| + |V_{n-3}^{s,t}|) + T \cdot (|W_{n-1}^{s,t}| + |W_{n-3}^{s,t}|) - 2 \cdot |S_{n-2}^{s,t}| - |S_{n-4}^{s,t}|=$$
$$= S \cdot \cfrac{T}{2} \cdot |W_{n-2}^{s,t}| + T \cdot (|W_{n-1}^{s,t}| + |W_{n-3}^{s,t}|) - 2 \cdot |S_{n-2}^{s,t}| - |S_{n-4}^{s,t}|=$$
$$= \cfrac{T}{2} \cdot (2 \cdot |W_{n-1}^{s,t}| + S \cdot |W_{n-2}^{s,t}| + 2 \cdot |W_{n-3}^{s,t}|) - 2 \cdot |S_{n-2}^{s,t}| - |S_{n-4}^{s,t}|=$$
$$= \cfrac{T}{2} \cdot [2 \cdot (|S_{n-1}^{s,t}| - |V_{n-1}^{s,t}|) + S \cdot (|S_{n-2}^{s,t}| - |V_{n-2}^{s,t}|) + 2 \cdot (|S_{n-3}^{s,t}| - |V_{n-3}^{s,t}|)] - $$
$$- 2 \cdot |S_{n-2}^{s,t}| - |S_{n-4}^{s,t}|=$$
$$= \cfrac{T}{2} \cdot (|S_{n-1}^{s,t}| + S \cdot |S_{n-2}^{s,t}| + |S_{n-3}^{s,t}|) - 2 \cdot |S_{n-2}^{s,t}| - |S_{n-4}^{s,t}| +$$
$$+ \cfrac{T}{2} \cdot (|S_{n-1}^{s,t}| - 2 \cdot |V_{n-1}^{s,t}| - S \cdot |V_{n-2}^{s,t}| + |S_{n-3}^{s,t}| - 2 \cdot |V_{n-3}^{s,t}|) =$$
$$= \cfrac{T}{2} \cdot |S_{n-1}^{s,t}| + \bigg(\cfrac{S \cdot T}{2}-2\bigg) \cdot |S_{n-2}^{s,t}| + \cfrac{T}{2} \cdot |S_{n-3}^{s,t}| - |S_{n-4}^{s,t}| +$$
$$+ \cfrac{T}{2} \cdot [S \cdot |V_{n-2}^{s,t}| + T \cdot |W_{n-2}^{s,t}| - |S_{n-3}^{s,t}| - S \cdot |V_{n-2}^{s,t}| + |S_{n-3}^{s,t}| - 2 \cdot (|V_{n-1}^{s,t}| + |V_{n-3}^{s,t}|)] =$$
$$= \cfrac{T}{2} \cdot |S_{n-1}^{s,t}| + \bigg(\cfrac{S \cdot T}{2}-2\bigg) \cdot |S_{n-2}^{s,t}| + \cfrac{T}{2} \cdot |S_{n-3}^{s,t}| - |S_{n-4}^{s,t}| +$$
$$+ \cfrac{T}{2} \cdot \bigg(T \cdot |W_{n-2}^{s,t}| - 2 \cdot \cfrac{T}{2} \cdot |W_{n-2}^{s,t}|\bigg) =$$
$$= \cfrac{T}{2} \cdot |S_{n-1}^{s,t}| + \bigg(\cfrac{S \cdot T}{2}-2\bigg) \cdot |S_{n-2}^{s,t}| + \cfrac{T}{2} \cdot |S_{n-3}^{s,t}| - |S_{n-4}^{s,t}|$$

Next, using (\ref{eq:rec-spheres-svw}), (\ref{eq:rec-spheres-rel1}) and (\ref{eq:rec-spheres-rel2}), we obtain the recurrence relation we have to show for the number of $s$-vertices on spheres.
Namely,

$$|V_n^{s,t}| = \cfrac{T}{2} \cdot |W_{n-1}^{s,t}| - |V_{n-2}^{s,t}| =$$
$$= \cfrac{T}{2} \cdot |W_{n-1}^{s,t}| - 2 \cdot |V_{n-2}^{s,t}| + |V_{n-2}^{s,t}| =$$
$$= \cfrac{T}{2} \cdot |W_{n-1}^{s,t}| - 2 \cdot |V_{n-2}^{s,t}| + \cfrac{T}{2} \cdot |W_{n-3}^{s,t}| - |V_{n-4}^{s,t}| =$$
$$= \cfrac{T}{2} \cdot (|W_{n-1}^{s,t}| + |W_{n-3}^{s,t}|) - 2 \cdot |V_{n-2}^{s,t}| - |V_{n-4}^{s,t}| =$$
$$= \cfrac{T}{2} \cdot (|S_{n-1}^{s,t}| - |V_{n-1}^{s,t}| + |S_{n-3}^{s,t}| - |V_{n-3}^{s,t}|) - 2 \cdot |V_{n-2}^{s,t}| - |V_{n-4}^{s,t}| =$$
$$= \cfrac{T}{2} \cdot [(S \cdot |V_{n-2}^{s,t}| + T \cdot |W_{n-2}^{s,t}| - |S_{n-3}^{s,t}|) - |V_{n-1}^{s,t}| + |S_{n-3}^{s,t}| - |V_{n-3}^{s,t}|] - 2 \cdot |V_{n-2}^{s,t}| - |V_{n-4}^{s,t}| =$$
$$= \cfrac{T}{2} \cdot [S \cdot |V_{n-2}^{s,t}| + 2 \cdot (|V_{n-1}^{s,t}| + |V_{n-3}^{s,t}|) - |V_{n-1}^{s,t}| - |V_{n-3}^{s,t}|] - 2 \cdot |V_{n-2}^{s,t}| - |V_{n-4}^{s,t}| =$$
$$= \cfrac{T}{2} \cdot (|V_{n-1}^{s,t}| + S \cdot |V_{n-2}^{s,t}| + |V_{n-3}^{s,t}|) - 2 \cdot |V_{n-2}^{s,t}| - |V_{n-4}^{s,t}| =$$
$$= \cfrac{T}{2} \cdot |V_{n-1}^{s,t}| + \bigg( \cfrac{S \cdot T}{2} - 2 \bigg) \cdot |V_{n-2}^{s,t}| + \cfrac{T}{2} \cdot |V_{n-3}^{s,t}| - |V_{n-4}^{s,t}|$$

Note that the lengths of spheres and the number of $s$-vertices on spheres follow this recurrence relation.
Then, based on (\ref{eq:rec-spheres-svw}), the number of $t$-vertices on spheres also follow the same recurrence relation.
Relation (\ref{eq:rec-spheres-ve}) implies that the number of $(s,t)$-edges and the number of $s$-vertices on spheres satisfy the same recurrence relation.
Because the lengths of spheres and the number of $(s,t)$-edges on spheres follow the same recurrence relation, (\ref{eq:rec-spheres-sef}) implies that the number of $(t,t)$-edges on spheres also fulfill this recurrence relation.

\end{proof}

For the rest of the section we consider the notations introduced in the theorem above.

The recurrence relation given in Theorem \ref{theorem:sphere-recurrence} holds for any sequence of consecutive spheres no matter what the center of spheres is (e.g. a vertex, an edge, a convex loop).
Still, the four initial terms of the sequence depend on this center (i.e. whether $v$ is an $s$-vertex, a $t$-vertex, an edge, etc).

Below we present the initial terms of the lengths of spheres and the number of simplices mentioned in Theorem \ref{theorem:sphere-recurrence}.
The four initial terms being given, we can also compute the terms with negative indices of these sequences.
We add an extra column for the index $-1$ such that
$$|S_3^{s,t}| = \cfrac{T}{2} \cdot |S_2^{s,t}| + \bigg(\cfrac{S \cdot T}{2}-2\bigg) \cdot |S_1^{s,t}| + \cfrac{T}{2} \cdot |S_0^{s,t}| - |S_{-1}^{s,t}|$$
Considering this value, it will be easier to compute the coefficients of the general term.

On $S_0^{s,t}$ there are $0$ vertices connected to edges.
We can therefore determine the number of vertices on the sphere $S_1^{s,t}$:
\begin{itemize}
    \item if the center is an $s$-vertex: $|S_1^{s,t}|=s$, $|V_1^{s,t}|=0$, $|W_1^{s,t}|=s$,
    \item if the center is a $t$-vertex: $|S_1^{s,t}|=t$, $|V_1^{s,t}|=\cfrac{t}{2}$, $|W_1^{s,t}|=\cfrac{t}{2}$.
\end{itemize}
Then we use (\ref{eq:rec-spheres-svw}), (\ref{eq:rec-spheres-rel1}) and (\ref{eq:rec-spheres-rel2}) to compute the lengths of the spheres $S_2^{s,t}$, $S_3^{s,t}$.
Besides we find the number of $s$-vertices and $t$-vertices belonging to these spheres.
To obtain the number of edges on the same spheres, we refer to (\ref{eq:rec-spheres-sef}) and (\ref{eq:rec-spheres-ve}).

\begin{table}[ht]
    \centering
    \begin{tabular}{|c||d||l|l|l|l|}
         \hline
         n & $-1$ & $0$ & $1$ & $2$ & $3$ \\
         \hline
         \hline
         $|S_n^{s,t}|$ & $-s$ & $0$ & $s$ & $s\cdot T$ & $s\cdot\bigg[2\bigg(\cfrac{T}{2}\bigg)^2 + \bigg(\cfrac{S \cdot T}{2}-2\bigg) + 1\bigg]$ \\
         \hline
         $|V_n^{s,t}|$ & $0$ & $0$ & $0$ & $s\cdot\cfrac{T}{2}$ & $s\cdot\bigg[\bigg(\cfrac{T}{2}\bigg)^2\bigg]$ \\
         \hline
         $|W_n^{s,t}|$ & $-s$ & $0$ & $s$ & $s\cdot\cfrac{T}{2}$ & $s\cdot\bigg[\bigg(\cfrac{T}{2}\bigg)^2 + \bigg(\cfrac{S \cdot T}{2}-2\bigg) + 1\bigg]$ \\
         \hline
         $|E_n^{s,t}|$ & $0$ & $0$ & $0$ & $s\cdot T$ & $s\cdot\bigg[2\bigg(\cfrac{T}{2}\bigg)^2\bigg]$ \\
         \hline
         $|F_n^{s,t}|$ & $-s$ & $0$ & $s$ & 0 & $s\cdot\bigg[\bigg(\cfrac{S \cdot T}{2}-2\bigg) + 1\bigg]$ \\
         \hline
    \end{tabular}
    \caption{The initial terms of the sequences representing the number of certain simplices on spheres centered at an $s$-vertex}
    \label{tab:initial-terms-s}
\end{table}

\begin{table}[ht]
    \centering
    \begin{tabular}{|c||e||l|l|l|l|}
         \hline
         n & $-1$ & $0$ & $1$ & $2$ & $3$ \\
         \hline
         \hline
         $|S_n^{s,t}|$ & $-\cfrac{t}{2} \cdot 2$ & $0$ & $\cfrac{t}{2} \cdot 2$ & $\cfrac{t}{2} \cdot (S+T)$ & $\cfrac{t}{2}\cdot\bigg[2\bigg(\cfrac{T}{2}\bigg)^2 + 3\bigg(\cfrac{S \cdot T}{2}-2\bigg) + 4\bigg]$ \\
         \hline
         $|V_n^{s,t}|$ & $-\cfrac{t}{2}$ & $0$ & $\cfrac{t}{2}$ & $\cfrac{t}{2} \cdot \cfrac{T}{2}$ & $\cfrac{t}{2}\cdot\bigg[\bigg(\cfrac{T}{2}\bigg)^2 + \bigg(\cfrac{S \cdot T}{2}-2\bigg) + 1\bigg]$ \\
         \hline
         $|W_n^{s,t}|$ & $-\cfrac{t}{2}$ & $0$ & $\cfrac{t}{2}$ & $\cfrac{t}{2} \cdot \bigg( S + \cfrac{T}{2}\bigg)$ & $\cfrac{t}{2}\cdot\bigg[\bigg(\cfrac{T}{2}\bigg)^2 + 2\bigg(\cfrac{S \cdot T}{2}-2\bigg) + 3\bigg]$ \\
         \hline
         $|E_n^{s,t}|$ & $-\cfrac{t}{2} \cdot 2$ & $0$ & $\cfrac{t}{2} \cdot 2$ & $\cfrac{t}{2} \cdot T$ & $\cfrac{t}{2}\cdot\bigg[2\bigg(\cfrac{T}{2}\bigg)^2 + 2\bigg(\cfrac{S \cdot T}{2}-2\bigg) + 2\bigg]$ \\
         \hline
         $|F_n^{s,t}|$ & $\hspace{0.25cm} 0$ & $0$ & $0$ & $\cfrac{t}{2} \cdot S$ & $\cfrac{t}{2}\cdot\bigg[\bigg(\cfrac{S \cdot T}{2}-2\bigg) + 2\bigg]$ \\
         \hline
    \end{tabular}
    \caption{The initial terms of the sequences representing the number of certain simplices on spheres centered at a $t$-vertex}
    \label{tab:initial-terms-t}
\end{table}

\newpage

\begin{theorem}\label{theorem:area-in-sphere}
Let $X$ be an $(s,t)$-uniform simplicial complex and let $\gamma$ be a loop in $X$.
Let $(D, f)$ be a minimal filling diagram for $\gamma$.
Let $v$ be an interior vertex of $D$.
Then the area of a sphere centered at $v$ is equal to \begin{equation}\label{eq:area-sum-spheres}
    A_n^{s,t} = 2 \bigg(\sum_{k=0}^{n}{|S_k^{s,t}|}\bigg) - |S_n^{s,t}|
\end{equation}
\end{theorem}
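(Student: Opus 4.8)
The plan is to obtain (\ref{eq:area-sum-spheres}) directly from Pick's formula applied to the sub-disc of $D$ cut off by the sphere $S_n^{s,t}$. The case $n=0$ is immediate: $S_0^{s,t}$ has no edge, it bounds the single vertex $v$, and both sides of (\ref{eq:area-sum-spheres}) vanish. So assume $n\ge 1$ and, as throughout, that $S_n^{s,t}$ lies in the interior of $D$, so that the full subcomplex $B_n$ of $D$ spanned by the vertices at distance at most $n$ from $v$ is a triangulated $2$-disc with boundary cycle $S_n^{s,t}$; this uses the tiling structure of an $(s,t)$-uniform complex described after Definition \ref{def:t-uniform}. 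By construction the triangles of $B_n$ are exactly the triangles inside $S_n^{s,t}$, so $A_n^{s,t}$ equals the area of $B_n$.

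Next I would identify the interior and boundary vertices of $B_n$. Every vertex at distance $<n$ is, in $B_n$, surrounded by all the triangles of $D$ that contain it (their vertices all lie within distance $n$), hence is an interior vertex of $B_n$; the remaining vertices of $B_n$, namely those at distance exactly $n$, are its boundary vertices. As recorded in the proof of Theorem \ref{theorem:sphere-recurrence}, for $k\ge 1$ the number of vertices at distance $k$ from $v$ equals the sphere length $|S_k^{s,t}|$, while at distance $0$ there is only the single vertex $v$ and $|S_0^{s,t}|=0$. Hence the number of boundary vertices is $V_b=|S_n^{s,t}|$ and the number of interior vertices is $V_i=1+\sum_{k=0}^{n-1}|S_k^{s,t}|$.

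Finally, plugging these into Pick's formula $A_n^{s,t}=2V_i+V_b-2$ gives $A_n^{s,t}=2\sum_{k=0}^{n-1}|S_k^{s,t}|+|S_n^{s,t}|$, and rewriting $2\sum_{k=0}^{n-1}|S_k^{s,t}|=2\sum_{k=0}^{n}|S_k^{s,t}|-2|S_n^{s,t}|$ yields (\ref{eq:area-sum-spheres}). One could equally argue by induction on $n$, since (\ref{eq:area-sum-spheres}) is equivalent to the statement that the annulus between $S_{n-1}^{s,t}$ and $S_n^{s,t}$ contains $|S_{n-1}^{s,t}|+|S_n^{s,t}|$ triangles (one per edge of each of the two spheres), but the Pick's-formula route avoids a separate case analysis of the annular triangles. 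The only step that genuinely draws on the geometry rather than being a formal manipulation is the assertion that $B_n$ is a disc whose boundary is precisely the simple cycle $S_n^{s,t}$, with as many vertices as edges; this is where the uniform tiling structure is essential, and it is the main thing one must check with care.
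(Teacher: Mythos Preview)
Your argument is correct. The paper does not actually prove this theorem but simply refers to \cite{L-t-unif}, Theorem~3.3; since Pick's formula is recalled in the preliminaries precisely for this purpose, your direct application of it to the ball $B_n$ is almost certainly the intended argument, and in any case it is a complete self-contained proof.
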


\begin{proof}
For the proof see \cite{L-t-unif}, Theorem $3.3$.
\end{proof}

\subsection{Order-4 homogeneous linear recurrences in (s,t)-uniform simplicial complexes.}\label{sec:recurrences-s-t-unif}

Using Theorem \ref{theorem:sphere-recurrence}, we investigate the following symmetric order-4 homogeneous linear recurrence relation
\begin{equation}\label{eq:rec-order-4}
    x_n = (P-R) x_{n-1} + (PR-2) x_{n-2} + (P-R) x_{n-3} - x_{n-4}
\end{equation}
with $P,R$ integers, $1 < R$, $2 < P$, $R < P$.

The recurrence relation above follows from the order-$2$ linear recurrence relation $x_n = P x_{n-1} - x_{n-2}$ which was used to obtain certain Lucas sequences in $t$-uniform simplicial complexes (see \cite{L-t-unif}).
Namely,
$$x_n = P x_{n-1} - x_{n-2} =$$
$$= (P-R) x_{n-1} + R x_{n-1} - x_{n-2} =$$
$$= (P-R) x_{n-1} + R (P x_{n-2} - x_{n-3}) - x_{n-2} =$$
$$= (P-R) x_{n-1} + (PR-1) x_{n-2} - R x_{n-3} =$$
$$= (P-R) x_{n-1} + (PR-2) x_{n-2} + x_{n-2} - R x_{n-3} =$$
$$= (P-R) x_{n-1} + (PR-2) x_{n-2} + (P x_{n-3} - x_{n-4}) - R x_{n-3} =$$
$$= (P-R) x_{n-1} + (PR-2) x_{n-2} + (P-R) x_{n-3} - x_{n-4}$$

The characteristic equation of the recurrence relation above is a degree $4$ reciprocal equation
$$x^4 - (P-R) x^3 - (PR-2) x^2 - (P-R) x + 1 = 0$$
After reordering the equation, we get
$$(x^4 - P x^3 + x^2) + (R x^3 - PR x^2 + R x) + (x^2 - P x + 1) = 0$$
$$x^2(x^2 - P x + 1) + R x(x^2 - P x + 1) + (x^2 - P x + 1) = 0$$
$$(x^2 - P x +1)(x^2 + R x + 1) = 0$$

The first polynomial of the equation above has the discriminant $D = P^2 - 4$ and the roots
\begin{equation}\label{eq:req-p-roots}
    a = \cfrac{P + \sqrt{P^2 - 4}}{2},\quad
    b = \cfrac{P - \sqrt{P^2 - 4}}{2}
\end{equation}
Thus $a \cdot b = 1$.
Also $a + b = P$, $a - b = \sqrt{P^2 - 4}$ and $a^2 - b^2 = P\sqrt{P^2 - 4}$.
As $2 < P$, we have $0 < b < 1 < a < P$.

The second polynomial of the equation above has the discriminant $D = R^2 - 4$ and the roots
\begin{equation}\label{eq:req-r-roots}
    c = \cfrac{-R + \sqrt{R^2 - 4}}{2},\quad
    d = \cfrac{-R - \sqrt{R^2 - 4}}{2}
\end{equation}
Thus $c \cdot d = 1$.
Also $c + d = -R$, $c - d = \sqrt{R^2 - 4}$ and $c^2 - d^2 = -R\sqrt{R^2 - 4}$.

There are $3$ cases to be analyzed.
We present them below.
\begin{enumerate}[label=(\alph*), ref=\alph*]
    \item \label{case-a} If $2 < R < P$ then $D > 0$ and there are distinct real roots such that $-P < -R < d < -1 < c < 0$.
    So
    \begin{equation}\label{eq:relation-solutions}
        0 < b < |c| < 1 < |d| < a
    \end{equation}

    \item \label{case-b} If $R = 2$ then $D = 0$ and $c = d = -1$.
    Hence
    \begin{equation}\label{eq:relation-solutions-duplicate}
        0 < b < |c| = 1 = |d| < a
    \end{equation}

    \item \label{case-c} If $1 < R < 2 < P$ then $D < 0$ and there are distinct complex roots.
\end{enumerate}

In the cases (\ref{case-a}) and (\ref{case-c}), the roots of the characteristic equation are distinct ($r_1 = a$, $r_2 = b$, $r_3 = c$, $r_4 = d$).
Then, due to (\ref{eq:rec-rel-general-term}) and using the notation $A = k_1$, $B = k_2$, $C = k_3$, $D = k_4$, each solution of the recurrence relation has the form
\begin{equation}\label{eq:general-term-distinct}
    x_n = \sum_{i=1}^{4} k_i r_i^n = A a^n + B b^n + C c^n + D d^n
\end{equation}

In the case (\ref{case-b}), two of the roots of the characteristic equation are duplicate ($r_1 = a$, $r_2 = b$, $r_3 = c = d = -1$).
Then, due to (\ref{eq:rec-rel-general-term-duplicate}) and using the notation $A = k_{11}$, $B = k_{21}$, $C = k_{31}$, $D = k_{32}$, each solution of the recurrence relation has the form
\begin{equation}\label{eq:general-term-duplicate}
    x_n = \sum_{i=1}^{3} \sum_{j=1}^{p_i} k_{ij} \cdot n^{j-1} \cdot r_i^n
    = A a^n + B b^n + (C + nD) c^n
\end{equation}

In the Tables \ref{tab:initial-terms-s} and \ref{tab:initial-terms-t} the initial terms of the sequences of simplices on spheres are given.
We are only interested in the sequences with these initial terms.
We consider the term $x_{-1}$ such that
$$x_3 = (P-R) x_2 + (PR-2) x_1 + (P-R) x_0 - x_{-1}$$
This will simplify the system of equations we have to solve.
We note that the initial terms of the sequence $(x_{n})_{n \geq -1}$ can take only certain values:
\begin{center}
    \begin{tabular}{|r|r|r|r|}
        \hline
        $x_{-1}$ & $x_0$ & $x_1$ & $x_2$ \\
        \hline
        $-v_1$ & $0$ & $v_1$ & $v_2$ \\
        \hline
    \end{tabular}
\end{center}
where either $v_1 > 0$, $v_2 \ge 0$ or $v_1 = 0$, $v_2 > 0$.

Next we show that for these initial terms we have $A \neq 0$.

In the cases (\ref{case-a}) and (\ref{case-c}), we apply (\ref{eq:general-term-distinct}) for $-1 \leq n \leq 2$.
We obtain the following system of equations with the unknowns $A, B, C, D$:
\begin{equation*}\label{eq:system-distinct-a-not-0-1}
\systeme[ABCD]{
    a^{-1} A + b^{-1} B + c^{-1} C + d^{-1} D = -v_1,
    a^0    A + b^0    B + c^0    C + d^0    D = 0,
    a^1    A + b^1    B + c^1    C + d^1    D = v_1,
    a^2    A + b^2    B + c^2    C + d^2    D = v_2
}
\end{equation*}
Since $a \cdot b = 1$ and $c \cdot d = 1$, the above system of equations becomes
\begin{equation*}\label{eq:system-distinct-a-not-0-2}
\sysautonum{(A_{*})}
\systeme[ABCD]{
    b A + a B + d C + c D = -v_1,
    A + B + C + D = 0,
    a A + b B + c C + d D = v_1,
    a^2 A + b^2 B + c^2 C + d^2 D = v_2
}
\end{equation*}
From $(A_1) + (A_3)$, we get
$(a + b)(A + B) + (c + d)(C + D) = 0$.
By $(A_2)$, we have $C + D = -(A + B)$.
This implies that $[(a + b) - (c + d)](A + B) = 0$.
However $a + b = P > 0$ and $c + d = -R < 0$.
This yields that $(a + b) - (c + d) > 0$.
Hence $A + B = 0$ and $C + D = 0$.

According to $(A_3)$, we have
$$a A + (-b A + b A) + b B + c C + (- d C + d C) + d D = v_1$$
$$(a-b) A + b (A + B) + (c-d) C + d (C + D) = v_1$$
\begin{equation} \label{eq:system-duplicate-v1-v2-1}
(a-b) A + (c-d) C = v_1
\end{equation}
Multiplying by $c+d = -R$, we get
$$(a-b)(c+d) A + (c^2-d^2) C = (c+d) \cdot v_1$$
Expressing $a,b,c$ and $d$ in terms of $P$ and $R$, we have
\begin{equation} \label{eq:equation-v1-v2-1}
    -\big(R\sqrt{P^2-4}\big) A - \big(R\sqrt{R^2 - 4}\big) C = -R \cdot v_1
\end{equation}
We argue similarly for $(A_4)$.
It follows that
$$a^2 A + (-b^2 A + b^2 A) + b^2 B + c^2 C + (- d^2 C + d^2 C) + d^2 D = v_2$$
$$(a^2-b^2) A + b^2 (A + B) + (c^2-d^2) C + d^2 (C + D) = v_2$$
$$(a^2-b^2) A + (c^2-d^2) C = v_2$$
Expressing $a,b,c$ and $d$ in terms of $P$ and $R$, we have
\begin{equation} \label{eq:equation-v1-v2-2}
    \big(P\sqrt{P^2 - 4}\big) A - \big(R\sqrt{R^2 - 4}\big) C = v_2
\end{equation}
Subtracting (\ref{eq:equation-v1-v2-1}) from (\ref{eq:equation-v1-v2-2}), we get
$$\big[(P+R)\sqrt{P^2 - 4}\big] A = v_2 + R \cdot v_1$$
So
\begin{equation}\label{eq:distinct-sphere-v1-v2-total-A}
    A = \cfrac{v_2 + R \cdot v_1}{(P+R)\sqrt{P^2 - 4}}
\end{equation}
We know that \begin{center}$B = -A = -\cfrac{v_2 + R \cdot v_1}{(P+R)\sqrt{P^2 - 4}}$ \end{center}
Computing similarly, we obtain $C$ and $D$.
Multiplying the result in (\ref{eq:system-duplicate-v1-v2-1}) by $a+b = P$, it follows that
$$(a^2-b^2) A + (a+b)(c-d) C = (a+b) \cdot v_1$$
Expressing $a, b, c$ and $d$ in terms of $P$ and $R$, we get
\begin{equation} \label{eq:equation-v1-v2-3}
    (P\sqrt{P^2-4}) A + (P\sqrt{R^2 - 4}) C = P \cdot v_1
\end{equation}
Subtracting (\ref{eq:equation-v1-v2-2}) from (\ref{eq:equation-v1-v2-3}), we obtain
$$\big[(P+R)\sqrt{R^2 - 4}\big] C = P \cdot v_1 - v_2$$
So
$$C = \cfrac{P \cdot v_1 - v_2}{(P+R)\sqrt{R^2 - 4}}$$
We know that $D = -C = -\cfrac{P \cdot v_1 - v_2}{(P+R)\sqrt{R^2 - 4}}$.

\begin{table}[ht]
    \centering
    \begin{tabular}{|c|c|c|c|}
        \hline
        $A$ & $B$ & $C$ & $D$ \\
        \hline
        $\cfrac{v_2 + R \cdot v_1}{(P+R)\sqrt{P^2 - 4}}$
        & $-\cfrac{v_2 + R \cdot v_1}{(P+R)\sqrt{P^2 - 4}}$
        & $\cfrac{P \cdot v_1 - v_2}{(P+R)\sqrt{R^2 - 4}}$
        & $-\cfrac{P \cdot v_1 - v_2}{(P+R)\sqrt{R^2 - 4}}$ \\
        \hline
    \end{tabular}
    \caption{The coefficients of the general term of the sequence $(x_{n})_{n \geq 0}$ in the cases (\ref{case-a}) and (\ref{case-c})}
    \label{tab:coef-x-a-c-v1-v2}
\end{table}

Recall the initial terms we considered in the cases (\ref{case-a}) and (\ref{case-c}).
Then, given that $R > 1$ and that either $v_1 > 0$, $v_2 \ge 0$ or $v_1 = 0$, $v_2 > 0$, we have $v_2 + R \cdot v_1 > 0$.
So $A > 0$.

In the case (\ref{case-b}) we apply (\ref{eq:general-term-duplicate}) for $-1 \leq n \leq 2$.
We obtain the following system of equations with the unknowns $A, B, C, D$:
\begin{equation*}\label{eq:system-duplicate-a-not-0-1}
\systeme[ABCD]{
    a^{-1} A + b^{-1} B + c^{-1} C - c^{-1} D = -v_1,
    a^0    A + b^0    B + c^0    C = 0,
    a^1    A + b^1    B + c^1    C + c^1 D = v_1,
    a^2    A + b^2    B + c^2    C + 2 c^2 D = v_2
}
\end{equation*}
We know that $a \cdot b = 1$ and $c = -1$.
Then the above system of equations becomes
\begin{equation*}\label{eq:system-duplicate-a-not-0-2}
\sysautonum{(B_{*})}
\systeme[ABCD]{
    b A + a B - C + D = -v_1,
    A + B + C = 0,
    a A + b B - C - D = v_1,
    a^2 A + b^2 B + C + 2D = v_2
}
\end{equation*}
From $(B_1) + (B_3)$, we get
$(a + b)(A + B) - 2C = 0$.
By $(B_2)$, we have $C = -(A + B)$.
So this implies
$[(a + b) + 2](A + B) = 0$.
However, since $a + b = P > 0$, we have $(a + b) + 2 > 0$.
Hence $A + B = 0$ and $C = 0$.
Then the system of equations becomes
\begin{equation*}\label{eq:system-duplicate-v1-v2-3}
\sysautonum{(C_{*})}
\systeme[ABCD]{
    b A + a B + D = -v_1,
    A + B = 0,
    a A + b B - D = v_1,
    a^2 A + b^2 B + 2D = v_2
}
\end{equation*}
From $(C_3) -b \cdot (C_2)$, we get $(a-b) A - D = v_1$.
From $(C_4) -b^2 \cdot (C_2)$, we have $(a^2-b^2) A + 2D = v_2$.
Expressing $a$ and $b$ in terms of $P$, we obtain
\begin{equation*}\label{eq:system-duplicate-v1-v2-4}
\sysautonum{(D_{*})}
\systeme[ABCD]{
    \sqrt{P^2-4} A - D = v_1,
    (P\sqrt{P^2-4}) A + 2D = v_2
}
\end{equation*}
From $(D_2) + 2 (D_1)$, we get $[(P+2)\sqrt{P^2-4}] A = v_2 + 2 \cdot v_1$.
So
\begin{equation}\label{eq:duplicate-sphere-v1-v2-total-A}
    A = \cfrac{v_2 + 2 \cdot v_1}{(P+2)\sqrt{P^2-4}}
\end{equation}
We know that $B = -A = -\cfrac{v_2 + 2 \cdot v_1}{(P+2)\sqrt{P^2-4}}$ and that $C = 0$.
Besides $(D_1)$ implies that
$$D = \cfrac{v_2 + 2 \cdot v_1}{P+2} - v_1
= \cfrac{(v_2 + 2 \cdot v_1) - (P+2) \cdot v_1}{P+2}
= \cfrac{v_2 - P \cdot v_1}{P+2}
= -\cfrac{P \cdot v_1 - v_2}{P+2}$$

\begin{table}[ht]
    \centering
    \begin{tabular}{|c|c|c|c|}
        \hline
        $A$ & $B$ & $C$ & $D$ \\
        \hline
        $\cfrac{v_2 + 2 \cdot v_1}{(P+2)\sqrt{P^2-4}}$
        & $-\cfrac{v_2 + 2 \cdot v_1}{(P+2)\sqrt{P^2-4}}$
        & $0$
        & $-\cfrac{P \cdot v_1 - v_2}{P+2}$ \\
        \hline
    \end{tabular}
    \caption{The coefficients of the general term of the sequence $(x_{n})_{n \geq 0}$ in the case (\ref{case-b})}
    \label{tab:coef-x-b-v1-v2}
\end{table}

Recall the initial terms we considered in the case (\ref{case-b}).
Then given that either $v_1 > 0$, $v_2 \ge 0$ or $v_1 = 0$, $v_2 > 0$, we have $v_2 + 2 \cdot v_1 > 0$.
So $A > 0$.

In conclusion in all cases we have $A \neq 0$.

Further we compute the limit of the ratio between two consecutive terms of the sequence $(x_n)_{n \ge 0}$.

In the case (\ref{case-a}), relation (\ref{eq:relation-solutions}) implies that
\begin{equation}\label{eq:limit-sol-ratio-distinct-1}
    \lim\limits_{n\to\infty} \bigg(\cfrac{b}{a}\bigg)^n =
    \lim\limits_{n\to\infty} \bigg(\cfrac{c}{a}\bigg)^n =
    \lim\limits_{n\to\infty} \bigg(\cfrac{d}{a}\bigg)^n = 0
\end{equation}

In the case (\ref{case-b}), relation (\ref{eq:relation-solutions-duplicate}) implies that
\begin{equation}\label{eq:limit-sol-ratio-duplicate-2}
    \lim\limits_{n\to\infty} \bigg(\cfrac{b}{a}\bigg)^n =
    \lim\limits_{n\to\infty} \bigg(\cfrac{c}{a}\bigg)^n = 0
\end{equation}

In the case (\ref{case-c}), we note that $c$ and $d$ are conjugate complex numbers.
We have $c \cdot d = 1$.
Then, because $c \cdot d = |c|^2 = |d|^2 = 1$, it follows that $|c| = |d| = 1$.
Since $a$ is a real number and $a > 1$, we get $\bigg|\cfrac{c}{a}\bigg| < 1$ and $\bigg|\cfrac{d}{a}\bigg| < 1$.
So
\begin{equation}\label{eq:limit-sol-ratio-distinct-3}
    \lim\limits_{n\to\infty} \bigg(\cfrac{b}{a}\bigg)^n =
    \lim\limits_{n\to\infty} \bigg(\cfrac{c}{a}\bigg)^n =
    \lim\limits_{n\to\infty} \bigg(\cfrac{d}{a}\bigg)^n = 0
\end{equation}

Hence, in the cases (\ref{case-a}) and (\ref{case-c}), due to (\ref{eq:general-term-distinct}), (\ref{eq:limit-sol-ratio-distinct-1}) and (\ref{eq:limit-sol-ratio-distinct-3}), we have
$$\lim\limits_{n\to\infty} \cfrac{x_n}{x_{n-1}} =
\lim\limits_{n\to\infty} \cfrac{A a^n + B b^n + C c^n + D d^n}{A a^{n-1} + B b^{n-1} + C c^{n-1} + D d^{n-1}} =$$
$$= \lim\limits_{n\to\infty}\cfrac
{a^n \bigg( A \cfrac{a^n}{a^n} + B \cfrac{b^n}{a^n} + C \cfrac{c^n}{a^n} + D \cfrac{d^n}{a^n} \bigg)}
{a^{n-1} \bigg( A \cfrac{a^{n-1}}{a^{n-1}} + B \cfrac{b^{n-1}}{a^{n-1}} + C \cfrac{c^{n-1}}{a^{n-1}} + D \cfrac{d^{n-1}}{a^{n-1}} \bigg)} =$$
$$= \lim\limits_{n\to\infty} a \cdot \cfrac
{A + B \bigg(\cfrac{b}{a}\bigg)^n + C \bigg(\cfrac{c}{a}\bigg)^n + D \bigg(\cfrac{d}{a}\bigg)^n}
{A + B \bigg(\cfrac{b}{a}\bigg)^{n-1} + C \bigg(\cfrac{c}{a}\bigg)^{n-1} + D \bigg(\cfrac{d}{a}\bigg)^{n-1}} = a$$

In the case (\ref{case-b}), due to (\ref{eq:general-term-duplicate}) and (\ref{eq:limit-sol-ratio-duplicate-2}), we get
$$\lim\limits_{n\to\infty} \cfrac{x_n}{x_{n-1}} =
\lim\limits_{n\to\infty} \cfrac{A a^n + B b^n + \big(C + nD\big) c^n}{A a^{n-1} + B b^{n-1} + \big(C + (n-1)D\big) c^{n-1}} =$$
$$= \lim\limits_{n\to\infty}\cfrac
{a^n \bigg[ A \cfrac{a^n}{a^n} + B \cfrac{b^n}{a^n} + \big(C + nD\big) \cfrac{c^n}{a^n} \bigg]}
{a^{n-1} \bigg[ A \cfrac{a^{n-1}}{a^{n-1}} + B \cfrac{b^{n-1}}{a^{n-1}} + \big(C + (n-1)D\big) \cfrac{c^{n-1}}{a^{n-1}} \bigg]} =$$
$$= \lim\limits_{n\to\infty} a \cdot \cfrac
{A + B \bigg(\cfrac{b}{a}\bigg)^n + \big(C + nD\big) \bigg(\cfrac{c}{a}\bigg)^n}
{A + B \bigg(\cfrac{b}{a}\bigg)^{n-1} + \big(C + (n-1)D\big) \bigg(\cfrac{c}{a}\bigg)^{n-1}} = a$$

So, because $A \neq 0$, in all cases we have
\begin{equation}\label{eq:limit-ratio-succ-terms}
    \lim\limits_{n\to\infty} \cfrac{x_n}{x_{n-1}} = a = \cfrac{1}{b} > 1
\end{equation}

Next, using Stolz-Ces\`aro theorem, we compute, for the sequence $(x_n)_{n \ge 0}$, the limit of the ratio of the sum of its first $n+1$ terms over its $(n+1)$th term.
Let the initial terms $x_{i}, 0 \leq i \leq 3$ of the sequence be non-negative and in strictly increasing order.
We assume, by induction, that $x_{k} > x_{k-1}$, $k < n$.
We show that $x_n > x_{n-1}$.
We know that $1 < R$, $2 < P$, $R < P$.
Recall $P$ and $R$ are integer values.
So $P-R > 1$ and $PR-2 > 1$.
It follows that
$$x_n = (P-R) x_{n-1} + (PR-2) x_{n-2} + (P-R) x_{n-3} - x_{n-4} >$$
$$> x_{n-1} + x_{n-2} + x_{n-3} - x_{n-4} > x_{n-1} + x_{n-2} > x_{n-1}$$
So the sequence $(x_n)_{n \ge 0}$ is strictly increasing.

From $x_n > x_{n-1} + x_{n-2}$ and $x_{n-1} > x_{n-2}$, we get $x_n > 2 \cdot x_{n-2}$.
So $x_n > 2^{\frac{n}{2}} \cdot x_2$ for $n$ even, and $x_n > 2^{\frac{n-1}{2}} \cdot x_1$ for $n$ odd.
Because $x_1$ and $x_2$ are non-negative, it follows that $\lim \limits_{n\to\infty} x_n = \infty$.
So the sequence $(x_n)_{n \ge 0}$ is divergent.

Stolz-Ces\`aro theorem states that given two sequences of real numbers $(a_n)_{n \ge 0}$ and $(b_n)_{n \ge 0}$, with $(b_n)_{n \ge 0}$ being a strictly monotone and divergent sequence, if the following limit exists $\lim \limits_{n\to\infty} \cfrac{a_{n+1} - a_n}{b_{n+1} - b_n} = l$, then $\lim \limits_{n\to\infty} \cfrac{a_{n}}{b_n} = l$.

We set $(a_n)_{n \ge 0} = \bigg(\sum\limits_{k = 0}^n x_k\bigg)_{n \ge 0}$ and $(b_n)_{n \ge 0} = (x_n)_{n \ge 0}$.
It is proven above that $(x_n)_{n \ge 0}$ is a strictly increasing, divergent sequence.
Then, using (\ref{eq:limit-ratio-succ-terms}), we have
$$\lim \limits_{n\to\infty} \cfrac{a_{n+1} - a_n}{b_{n+1} - b_n} =
\lim \limits_{n\to\infty} \cfrac{\sum\limits_{k = 0}^{n+1} x_k - \sum\limits_{k = 0}^n x_k}{x_{n+1} - x_n} =$$
$$= \lim \limits_{n\to\infty} \cfrac{x_{n+1}}{x_{n+1} - x_n}
= \lim \limits_{n\to\infty} \cfrac{1}{1 - \cfrac{x_n}{x_{n+1}}}
= \cfrac{1}{1 - \cfrac{1}{a}}
= \cfrac{a}{a - 1}$$
Because the above limit exists, the Stolz-Ces\`aro theorem implies that
\begin{equation}\label{eq:limit-sum-over-last-a-b}
    \lim\limits_{n\to\infty} \cfrac{\sum\limits_{k = 0}^n x_k}{x_n} = \cfrac{a}{a-1}
\end{equation}
Using (\ref{eq:req-p-roots}) and (\ref{eq:limit-sum-over-last-a-b}), we can express the limit in terms of $P$ (as computed in \cite{L-t-unif})
\begin{equation}\label{eq:limit-sum-over-last-p}
    \lim\limits_{n\to\infty} \cfrac{\sum\limits_{k = 0}^n x_k}{x_n} =
    \cfrac{1 + \sqrt{\cfrac{P+2}{P-2}}}{2}
\end{equation}

We note that the limit depends only on the roots $a$ and $b$ (not on the roots $c$ and $d$).
Therefore, because $a$ and $b$ depend only on $P$, the limit also depends only on $P$ (not on $R$).

In the next theorem we find, for spheres in minimal filling diagrams associated to loops in $(s,t)$-uniform simplicial complexes, the limit of the sequence $\bigg(\cfrac{A_{n}^{s,t}}{|S_{n}^{s,t}|}\bigg)_{n \geq 0}$ as $n \rightarrow \infty$.
It is important to note that for such complexes this sequence is not strictly increasing as it was for $t$-uniform simplicial complexes (see \cite{L-t-unif}).
So for $(s,t)$-uniform complexes we can no longer find an inequality between the area and the lengths of spheres using as constant the limit of their ratio.

\begin{theorem}\label{theorem:area-length-ratio-s-t-unif}
In $(s,t)$-uniform simplicial complexes, $s, t \ge 6$, $t$ even, except for $s = t = 6$, the following holds:
\begin{equation}\label{eq:limit-area-length-s-t-uniform}
    \lim_{n\to\infty} \cfrac{A_n^{s,t}}{|S_n^{s,t}|} = \sqrt{\cfrac{T + 8 + \sqrt{T^2 + 8ST}}{T - 8 + \sqrt{T^2 + 8ST}}}
\end{equation}
where $T = t-4$ and $S = s-4$.
The limit can be also expressed as
\begin{equation}\label{eq:limit-area-length-s-t-uniform-p}
    \lim_{n\to\infty} \cfrac{A_n^{s,t}}{|S_n^{s,t}|} = \sqrt{\cfrac{P + 2}{P - 2}}
\end{equation}
where $P = \cfrac{T + \sqrt{T^2 + 8ST}}{4}$.
\end{theorem}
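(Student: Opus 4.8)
The plan is to recognize the sphere-length sequence $x_n = |S_n^{s,t}|$ as a special instance of the symmetric order-$4$ recurrence \eqref{eq:rec-order-4} analyzed in Section~\ref{sec:recurrences-s-t-unif}, transport the limit \eqref{eq:limit-sum-over-last-p} through the area identity of Theorem~\ref{theorem:area-in-sphere}, and then change variables from the auxiliary parameter $P$ back to $s$ and $t$. For the parameter matching: by Theorem~\ref{theorem:sphere-recurrence} the sequence $x_n = |S_n^{s,t}|$ obeys \eqref{eq:rec-spheres}, and comparing with \eqref{eq:rec-order-4} forces $P - R = \tfrac{T}{2}$ and $PR - 2 = \tfrac{ST}{2} - 2$, i.e. $PR = \tfrac{ST}{2}$. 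Eliminating $R = P - \tfrac{T}{2}$ gives the quadratic $P^{2} - \tfrac{T}{2}P - \tfrac{ST}{2} = 0$, whose positive root is $P = \cfrac{T + \sqrt{T^{2}+8ST}}{4}$, with $R = P - \tfrac{T}{2} = \cfrac{-T + \sqrt{T^{2}+8ST}}{4}$. Thus $(|S_n^{s,t}|)$ satisfies \eqref{eq:rec-order-4} with exactly these $P,R$.

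Next I would check that these $(P,R)$ lie in the range used in Section~\ref{sec:recurrences-s-t-unif}. Here $R < P$ is immediate since $T>0$; a short computation gives $P > 2 \Leftrightarrow T(S+2) > 8$ and $R > 1 \Leftrightarrow T(S-1) > 2$, and running through $T \in \{2,4,6,\dots\}$ and $S = s-4 \ge 2$ shows both hold for $s,t\ge 6$ with $t$ even, with equality precisely at $s=t=6$ (the excluded case). Moreover the initial terms of $(|S_n^{s,t}|)$ recorded in Tables~\ref{tab:initial-terms-s} and \ref{tab:initial-terms-t} are of the admissible form $x_{-1}=-v_1$, $x_0=0$, $x_1=v_1>0$, $x_2=v_2\ge 0$, so the argument of Section~\ref{sec:recurrences-s-t-unif} showing $A\neq 0$ applies and yields leading coefficient $A>0$; since $a = \cfrac{P+\sqrt{P^{2}-4}}{2}>1$, the sequence $(|S_n^{s,t}|)$ is (eventually) strictly increasing and divergent, which is all the Stolz--Ces\`aro argument producing \eqref{eq:limit-sum-over-last-a-b} and \eqref{eq:limit-sum-over-last-p} actually requires. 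I regard this verification as the main obstacle: the parameters $P,R$ produced by the matching are in general irrational, so the integrality hypothesis of Section~\ref{sec:recurrences-s-t-unif} is not literally satisfied, and one must confirm that the only places it was used (the monotonicity and divergence bookkeeping) can be re-derived from the explicit initial data above.

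With this in place, \eqref{eq:limit-sum-over-last-p} applied to $x_n = |S_n^{s,t}|$ gives $\lim_{n\to\infty}\Big(\sum_{k=0}^{n}|S_k^{s,t}|\Big)\big/|S_n^{s,t}| = \tfrac12\big(1 + \sqrt{\tfrac{P+2}{P-2}}\big)$. Dividing the identity $A_n^{s,t} = 2\big(\sum_{k=0}^{n}|S_k^{s,t}|\big) - |S_n^{s,t}|$ of Theorem~\ref{theorem:area-in-sphere} by $|S_n^{s,t}|$ and letting $n\to\infty$ then gives $\lim_{n\to\infty}\cfrac{A_n^{s,t}}{|S_n^{s,t}|} = 2\cdot\tfrac12\big(1+\sqrt{\tfrac{P+2}{P-2}}\big) - 1 = \sqrt{\tfrac{P+2}{P-2}}$, which is \eqref{eq:limit-area-length-s-t-uniform-p}. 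Finally, substituting $P = \cfrac{T+\sqrt{T^{2}+8ST}}{4}$ and cancelling the factor $4$ in the numerator and denominator of $\tfrac{P+2}{P-2}$ turns this into $\sqrt{\cfrac{T+8+\sqrt{T^{2}+8ST}}{T-8+\sqrt{T^{2}+8ST}}}$, which is \eqref{eq:limit-area-length-s-t-uniform}. The parameter matching and the final substitution are routine algebra; only the hypothesis check is delicate.
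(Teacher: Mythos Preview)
Your proposal is correct and follows essentially the same route as the paper: match the recurrence \eqref{eq:rec-spheres} against \eqref{eq:rec-order-4} to obtain the system $P-R=\tfrac{T}{2}$, $PR=\tfrac{ST}{2}$, solve for $P$, apply \eqref{eq:limit-sum-over-last-p}, and finish via the area identity \eqref{eq:area-sum-spheres}. The paper does not explicitly address the integrality issue you flag; your observation that the monotonicity/divergence argument only needs the actual recurrence coefficients $\tfrac{T}{2}\ge 1$ and $\tfrac{ST}{2}-2\ge 1$ (both of which hold for $s,t\ge 6$, $t$ even, $(s,t)\neq(6,6)$) rather than integrality of $P,R$ is a genuine refinement over the paper's presentation.
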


\begin{proof}

If $v$ is an $s$-vertex, the lengths of spheres $|S_n^{s,t}|$ are the terms of a sequence $(x_n)_{n \ge 0}$ multiplied by $s$.
The initial terms of this sequence are given in Table \ref{tab:initial-terms-s}.

If $v$ is a $t$-vertex, the lengths of spheres $|S_n^{s,t}|$ are the terms of a sequence $(x_n)_{n \ge 0}$ multiplied by $\cfrac{t}{2}$.
The initial terms of this sequence are given in Table \ref{tab:initial-terms-t}.

Based on (\ref{eq:rec-spheres}) (Theorem \ref{theorem:sphere-recurrence}), the lengths of spheres centered at a vertex are sequences that fulfill a certain recurrence relation with parameters expressed in terms of $S$ and $T$.
Identifying the coefficients of the recurrence relations (\ref{eq:rec-spheres}) and (\ref{eq:rec-order-4}), we obtain the following system of equations:
\begin{equation} \label{eq:system-hyperbolas}
    \begin{cases}
        P - R = \cfrac{T}{2} \\
        P \cdot R = \cfrac{S \cdot T}{2}
    \end{cases}
\end{equation}
This implies that
$P \cdot \bigg(P - \cfrac{T}{2}\bigg) = \cfrac{S \cdot T}{2}$
and hence \begin{center}
$2P^2 - TP - ST = 0$ \end{center}
This equation has the roots
\begin{center}
$P_{1,2} = \cfrac{T \pm \sqrt{T^2 + 8ST}}{4}$
\end{center}
Because $P > 2$, we consider
\begin{equation}\label{eq:p-in-terms-of-s-t}
    P = \cfrac{T + \sqrt{T^2 + 8ST}}{4}
\end{equation}

By (\ref{eq:limit-sum-over-last-p}), if $v$ is an $s$-vertex, we have
\begin{equation}\label{eq:spaheres-ratio-sum-last-s}
    \lim_{n\to\infty} \cfrac{\sum_{k=0}^n{|S_k^{s,t}|}}{|S_n^{s,t}|} =
    \lim_{n\to\infty} \cfrac{\sum_{k=0}^n{\big( s \cdot x_k \big) }}{s \cdot x_n} =
    \lim_{n\to\infty} \cfrac{\sum_{k=0}^n{x_k}}{x_n} = \cfrac{1 + \sqrt{\cfrac{P+2}{P-2}}}{2}
\end{equation}

Similarly, if $v$ is a $t$-vertex, we get 
\begin{equation}\label{eq:spaheres-ratio-sum-last-t}
    \lim_{n\to\infty} \cfrac{\sum_{k=0}^n{|S_k^{s,t}|}}{|S_n^{s,t}|} =
    \lim_{n\to\infty} \cfrac{\sum_{k=0}^n{x_k}}{x_n} = \cfrac{1 + \sqrt{\cfrac{P+2}{P-2}}}{2}
\end{equation}

Using (\ref{eq:area-sum-spheres}), (\ref{eq:p-in-terms-of-s-t}), (\ref{eq:spaheres-ratio-sum-last-s}) and (\ref{eq:spaheres-ratio-sum-last-t}), for a sphere we get the limit, when $n$ goes to infinity, of the ratio of its area over its length.
Namely,
$$\lim_{n\to\infty} \cfrac{A_n^{s,t}}{|S_n^{s,t}|} =
\lim_{n\to\infty} \cfrac{2 \big(\sum_{k=0}^{n}{|S_k^{s,t}|}\big) - |S_n^{s,t}|}{|S_n^{s,t}|} =
\lim_{n\to\infty} 2 \cdot \cfrac{\sum_{k=0}^{n}{|S_k^{s,t}|}}{|S_n^{s,t}|} - 1 =$$
$$= 2 \cdot \cfrac{1 + \sqrt{\cfrac{P+2}{P-2}}}{2} - 1
=\sqrt{\cfrac{P+2}{P-2}} =$$
$$=\sqrt{\cfrac{T + 8 + \sqrt{T^2 + 8ST}}{T - 8 + \sqrt{T^2 + 8ST}}}$$
\end{proof}

\begin{remark}
For a $t$-uniform simplicial complex, we have $s = t$.
Since $S = T = t-4$, and due to (\ref{eq:p-in-terms-of-s-t}), we obtain
$$P = \cfrac{T + \sqrt{T^2 + 8ST}}{4} = \cfrac{T + T\sqrt{1 + 8}}{4} =  T = t-4$$
\end{remark}
Note that in this case the limit of the ratio of the area over the length of a sphere is the same as the one computed in \cite{L-t-unif}.
Namely,
$$\lim_{n\to\infty} \cfrac{A_n^{s,t}}{|S_n^{s,t}|} =
\sqrt{\cfrac{P + 2}{P - 2}} =
\sqrt{\cfrac{t - 2}{t - 6}}$$

In Figure \ref{fig:table-hyperbolas-s-t} we represent some $(s,t)$-uniform simplicial complexes.
The graph has $s$ on the horizontal axis and $t$ on the vertical axis.
The points at the grid intersections represent the $(s,t)$-uniform simplicial complexes.
In this graph we are only interested in some $(s,t)$-uniform simplicial complexes. Namely,
\begin{itemize}
    \item for $s \ge 6$, $t \ge 6$, $t$ even, in the $(s,t)$-uniform complexes colored at the red dots and which lie at the intersection points of the dark colored axes;
    \item  for $s = t$, $t$ odd, in the $(s,t)$-uniform complexes colored at the green dots and which lie on the main diagonal.
\end{itemize}

\begin{figure}[ht]
    \centering
    \includegraphics[width=\textwidth]{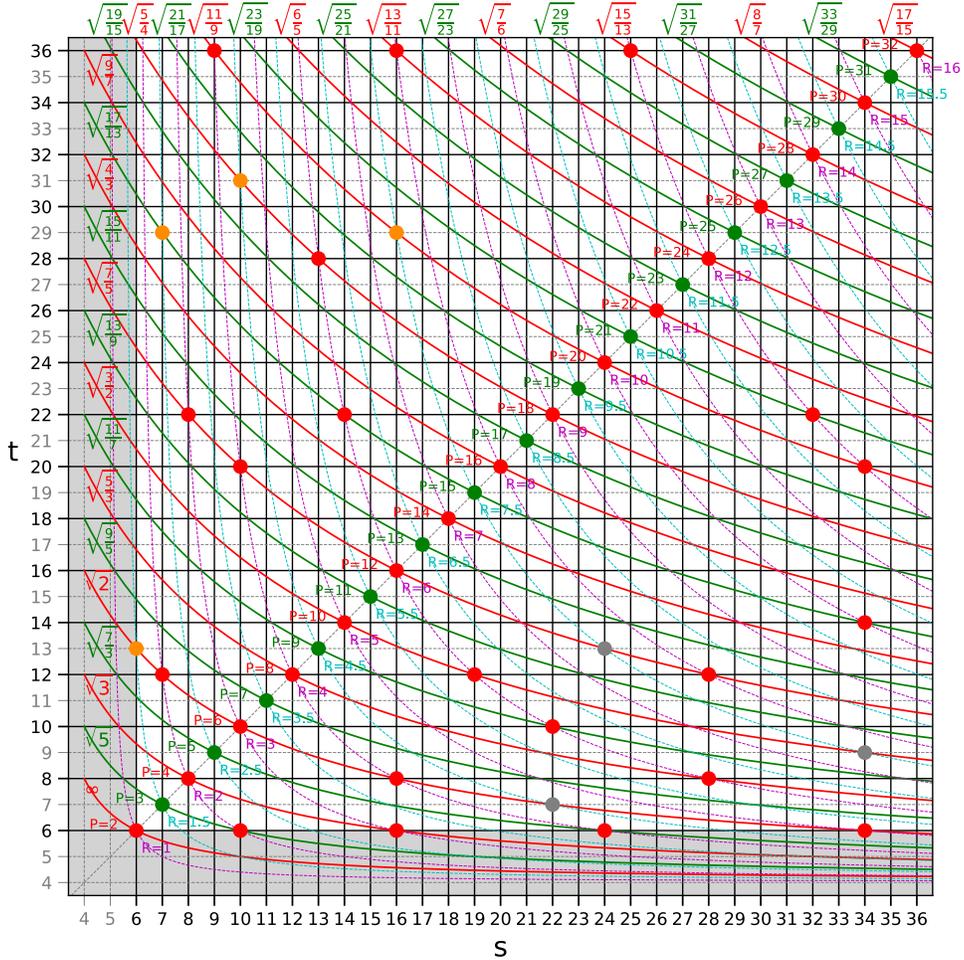}
    \caption{Graph of hyperbolas for (s,t)-uniform simplicial complexes}
    \label{fig:table-hyperbolas-s-t}
\end{figure}

For fixed values of $P$, the system of equations given in (\ref{eq:system-hyperbolas}) becomes the equation of a hyperbola in terms of $S$ and $T$:
$$T \cdot (S+P) = 2 \cdot P^2$$
For clarity, we represent only those hyperbolas that pass through points corresponding to $t$-uniform complexes.
We represent all $(s,t)$-uniform complexes that lie on these hyperbolas.
Then, given (\ref{eq:limit-area-length-s-t-uniform-p}), for all $(s,t)$-uniform complexes lying on one such hyperbola, the limit of the ratio of sphere area over sphere length is the same. This is the case because this limit is expressed only in terms of $P$.
The red hyperbolas correspond to even values of $P$.
Each such hyperbola contains a point that corresponds to a $t$-uniform complex for $t$ even.
The green hyperbolas correspond to odd values of $P$.
Each such hyperbola contains a point that corresponds to a $t$-uniform complex for $t$ odd.
On the left side and on the top side of the graph we represent the ratio of sphere area over sphere length for the complexes that lie on these hyperbolas.

Similarly, for fixed values of $R$, the system of equations given in (\ref{eq:system-hyperbolas}) becomes the equation of a hyperbola.
However, for the $(s,t)$-uniform complexes lying on one such hyperbola, the limit of the ratio of sphere area over sphere length is no longer the same.
The magenta hyperbolas correspond to integer values of $R$.
Each such hyperbola contains a point that corresponds to a $t$-uniform complex for $t$ even.
If we ignore the restriction that $R$ takes only integer positive values, then the cyan hyperbolas correspond to fractional values of $R$.
Each such hyperbola contains a point that corresponds to a $t$-uniform complex for $t$ odd.

The orange and gray dots correspond to $(s,t)$-uniform complexes for $t$ odd.
In such complexes, the $t$-vertices must have more $t$-vertex neighbours than $s$-vertex neighbours.
So for the orange dots, which have $t > s$, we assume that $\lim _{n \rightarrow \infty} \cfrac{A_{n}^{s,t}}{|S_{n}^{s,t}|}$ is less or equal to the value associated to the hyperbola containing the pair $(s,t)$.

One can identify on the graph a few sets of $(s,t)$-uniform simplicial complexes with the same limit of the ratio of sphere area over sphere length.
Namely,
\renewcommand{\arraystretch}{2}
\begin{table}[ht]
    \begin{tabular}{|l|c|}
        \hline
        $(s,t)$-uniform complexes & $\lim\limits_{n\to\infty} \cfrac{A_n^{s,t}}{|S_n^{s,t}|}$ \\
        \hline
        $(10, 6), (7, 7)$ & $\sqrt{5}$ \\
        \hline
        $(16, 6), (8, 8)$ & $\sqrt{3}$ \\
        \hline
        $(24, 6), (9, 9)$ & $\sqrt{\cfrac{7}{3}}$ \\
        \hline
        $(34, 6), (16, 8)$, $(10, 10), (7, 12)$ & $\sqrt{2}$ \\
        \hline
    \end{tabular}
    \caption{Sets of $(s,t)$-uniform simplicial complexes with the same limit of the ratio of sphere area over sphere length}
    \label{tab:complexes-same-ratio}
\end{table}
\renewcommand{\arraystretch}{1}

In Figure \ref{fig:sphere-area-length-limit-s-vertex} we represent, for some (s,t)-uniform complexes, the ratio of the area over the length of spheres centered at an $s$-vertex and whose radii vary from $0$ to $20$.
In Figure \ref{fig:sphere-area-length-limit-t-vertex} we represent the same but for spheres centered at a $t$-vertex.
On the horizontal axis we represent the radii of spheres.
On the vertical axis we represent the values of the ratio.
We consider the $(s,t)$-uniform complexes given in Table \ref{tab:complexes-same-ratio}.

These graphs outline that for $s \neq t$ the sequences representing the ratio of sphere area over sphere length are no longer strictly increasing as the radii of spheres increase.
This was the case only for $t$-uniform simplicial complexes (see \cite{L-t-unif}).
Instead each sequence oscillates up and down as it approaches its limit and it may take values above and below the limit.

\begin{figure}[ht]
    \centering
    \includegraphics[width=\textwidth]{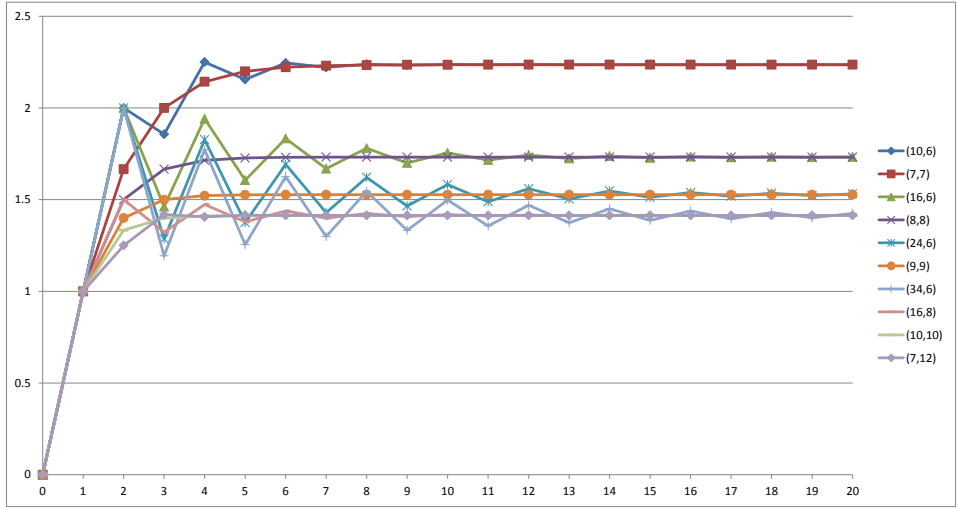}
    \caption{The ratio of sphere area over sphere length for spheres centered at an $s$-vertex}
    \label{fig:sphere-area-length-limit-s-vertex}
\end{figure}

\begin{figure}[ht]
    \centering
    \includegraphics[width=\textwidth]{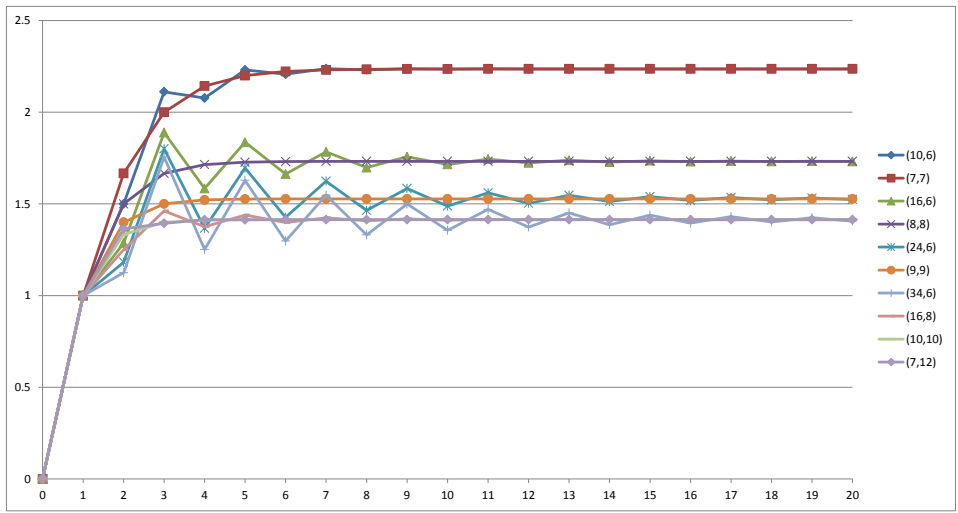}
    \caption{The ratio of sphere area over sphere length for spheres centered at a $t$-vertex}
    \label{fig:sphere-area-length-limit-t-vertex}
\end{figure}

\subsection{Curvature inside spheres}\label{sec:curvature-inside-spheres}

In this section let $s,t$ such that either $s, t \ge 6$, $t$ even except for $s = t = 6$.
We study the Gaussian curvature inside spheres belonging to $(s,t)$-uniform complexes as the radii of spheres grow.
More precisely, we compute the average Gaussian curvature for vertices inside such spheres.

Because $s, t > 6$, the vertices have angle excess.
Thus there is Gaussian curvature at each vertex.
Namely,
\begin{itemize}
    \item $K(v) = 6 - s$, if $v$ is an $s$-vertex;
    \item $K(v) = 6 - t$, if $v$ is a $t$-vertex.
\end{itemize}

For the rest of the paper we consider only the case when spheres are centered at $s$-vertices.
For spheres around $t$-vertices we would obtain similar results.

\begin{theorem}\label{theorem:average-curvature-s-t-unif}
In $(s,t)$-uniform simplicial complexes, $s, t \ge 6$, $t$ even, except for $s = t = 6$, the following holds:
\begin{equation}\label{eq:average-curvature-s-t-uniform}
    \lim_{n\to\infty} K_{Avg}(S_{n}^{s,t}) = 2 - \cfrac{T + \sqrt{T^2 + 8ST}}{4}
\end{equation}
where $T = t-4$ and $S = s-4$.
The limit can be also expressed as
\begin{equation}\label{eq:average-curvature-s-t-uniform-p}
    \lim_{n\to\infty} K_{Avg}(S_{n}^{s,t}) = 2 - P
\end{equation}
where $P = \cfrac{T + \sqrt{T^2 + 8ST}}{4}$.
\end{theorem}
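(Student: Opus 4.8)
The plan is to express the average Gaussian curvature $K_{Avg}(S_n^{s,t})$ as a quotient of two sums and then apply the Stolz--Ces\`aro theorem, exactly as in the proof of Theorem~\ref{theorem:area-length-ratio-s-t-unif}. Write $D_n$ for the subdisc of $D$ bounded by $S_n^{s,t}$; its interior vertices are the vertices at distance at most $n-1$ from the centre $v$. Since $v$ is an $s$-vertex and the remaining interior vertices split into $s$- and $t$-vertices as recorded in Table~\ref{tab:initial-terms-s}, the total Gaussian curvature and the number of interior vertices are
\begin{equation*}
    K(D_n) = (6-s) + \sum_{k=1}^{n-1}\big[(6-s)\,|V_k^{s,t}| + (6-t)\,|W_k^{s,t}|\big], \qquad N_n = 1 + \sum_{k=1}^{n-1}|S_k^{s,t}|,
\end{equation*}
so that $K_{Avg}(S_n^{s,t}) = K(D_n)/N_n$. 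By Theorem~\ref{theorem:sphere-recurrence} and the analysis following (\ref{eq:rec-order-4}) one has $|S_k^{s,t}| \to \infty$, hence $(N_n)_n$ is strictly increasing and divergent. Rewriting $6-s = 2-S$ and $6-t = 2-T$ and using (\ref{eq:rec-spheres-svw}), Stolz--Ces\`aro gives
\begin{equation*}
    \lim_{n\to\infty} K_{Avg}(S_n^{s,t}) = \lim_{n\to\infty} \cfrac{(2-S)\,|V_{n-1}^{s,t}| + (2-T)\,|W_{n-1}^{s,t}|}{|S_{n-1}^{s,t}|}
\end{equation*}
as soon as the limit on the right exists.

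I would then prove that $\alpha := \lim_n |V_n^{s,t}|/|S_n^{s,t}|$ and $\beta := \lim_n |W_n^{s,t}|/|S_n^{s,t}|$ exist and satisfy $\alpha + \beta = 1$. All three of $|V_n^{s,t}|,|W_n^{s,t}|,|S_n^{s,t}|$ obey the recurrence (\ref{eq:rec-spheres}) (Theorem~\ref{theorem:sphere-recurrence}), whose characteristic roots $a,b,c,d$ satisfy $|b|,|c|,|d| < a$ in each of the cases (\ref{case-a})--(\ref{case-c}); moreover, reading off the initial data from Table~\ref{tab:initial-terms-s} and repeating the computation of the coefficient $A$ in Section~\ref{sec:recurrences-s-t-unif}, the coefficient of $a^n$ in each of these three general terms is positive. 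Hence each of the three sequences is asymptotic to a positive multiple of $a^n$; the limits $\alpha,\beta$ exist, and $\alpha+\beta = 1$ by (\ref{eq:rec-spheres-svw}). Consequently
\begin{equation*}
    \lim_{n\to\infty} K_{Avg}(S_n^{s,t}) = (2-S)\alpha + (2-T)\beta = 2 - (S\alpha + T\beta).
\end{equation*}

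It remains to compute $S\alpha + T\beta$, which is the crux of the argument. Dividing relation (\ref{eq:rec-spheres-rel1}), namely $|S_n^{s,t}| = S\,|V_{n-1}^{s,t}| + T\,|W_{n-1}^{s,t}| - |S_{n-2}^{s,t}|$, by $|S_{n-1}^{s,t}|$ and letting $n\to\infty$ — using $|S_n^{s,t}|/|S_{n-1}^{s,t}| \to a$, which is (\ref{eq:limit-ratio-succ-terms}) — yields $a = S\alpha + T\beta - 1/a$, i.e. $S\alpha + T\beta = a + 1/a$. Since $a,b$ are the roots of $x^2 - Px + 1$ with $ab = 1$ (see (\ref{eq:req-p-roots})), we get $a + 1/a = a + b = P$; and identifying the coefficients of (\ref{eq:rec-spheres}) with those of (\ref{eq:rec-order-4}) (the system (\ref{eq:system-hyperbolas})) gives $P = \tfrac{1}{4}(T + \sqrt{T^2 + 8ST})$ by (\ref{eq:p-in-terms-of-s-t}). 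Therefore $\lim_{n\to\infty} K_{Avg}(S_n^{s,t}) = 2 - P = 2 - \tfrac{1}{4}(T + \sqrt{T^2 + 8ST})$, which are the two stated expressions. The only delicate point is the existence of $\alpha$ and $\beta$, i.e. that $a$ is strictly dominant among the roots and enters each of the three sequences with nonzero coefficient; both facts are already contained in the root analysis of Section~\ref{sec:recurrences-s-t-unif} together with the explicit initial values of Table~\ref{tab:initial-terms-s}. One could instead begin from the combinatorial Gauss--Bonnet identity (\ref{eq:gauss-bonnet-short-disk}), writing $K(D_n) = 6 - k_g(S_n^{s,t})$, but this would require tracking the geodesic curvature along $S_n^{s,t}$ vertex by vertex, so the direct count above is preferable.
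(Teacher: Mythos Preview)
Your argument is correct, and the overall skeleton matches the paper's: both reduce $K_{Avg}(S_n^{s,t})$ to
\[
\lim_{n\to\infty}\frac{(6-s)\,|V_{n-1}^{s,t}|+(6-t)\,|W_{n-1}^{s,t}|}{|S_{n-1}^{s,t}|}
\]
and then evaluate this limit via the dominant root $a$. Where you diverge is in how this last limit is computed. The paper explicitly solves for the leading coefficients $A_S,A_V,A_W$ of $a^n$ in each of $|S_n^{s,t}|,|V_n^{s,t}|,|W_n^{s,t}|$ (Tables~\ref{tab:sphere-total-distinct}--\ref{tab:sphere-t-vertices-duplicate}), obtaining $A_V/A_S=(P-R)/(2P-R)$ and $A_W/A_S=P/(2P-R)$, and then performs an algebraic simplification using $T=2(P-R)$, $S=PR/(P-R)$ to reach $2-P$. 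You bypass all of that: by dividing the combinatorial identity (\ref{eq:rec-spheres-rel1}) through by $|S_{n-1}^{s,t}|$ and passing to the limit, you read off $S\alpha+T\beta=a+1/a=P$ directly. This is strictly more economical---no linear systems, no case split on $R$, no $P,R$-substitution at the end---and it makes transparent \emph{why} the answer depends only on $P$: relation (\ref{eq:rec-spheres-rel1}) encodes exactly the weighted combination $S|V|+T|W|$ that appears in the curvature. The paper's route, on the other hand, yields the individual ratios $\alpha=A_V/A_S$ and $\beta=A_W/A_S$, which your argument never isolates; so if one later needed $\alpha$ or $\beta$ separately, the explicit coefficients would be required. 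Your use of Stolz--Ces\`aro on $K(D_n)/N_n$ is also a mild streamlining of the paper's step, which instead applies (\ref{eq:limit-sum-over-last-p}) to each of the three sequences and cancels the common factor.
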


\begin{proof}

We start by finding the general term of the sequence representing the length of spheres.
We denote this sequence by $(|S_{n}^{s,t}|)_{n \geq 0}$.
We consider the sequence $(x_n)_{n \ge -1}$ such that $|S_n^{s,t}| = s  \cdot x_n$.
The initial terms of this sequence are given in Table \ref{tab:initial-terms-s}.
They depend on $s$, $t$, $S$ and $T$.
We use the system of equations given in (\ref{eq:system-hyperbolas}) to express them in terms of $P$ and $R$.
It turns out it is easier to work with these values.

We find the formula of the general term of the sequence $(x_n)_{n \ge -1}$.
Below are the initial terms of the sequence $(|S_n^{s,t}|)_{n \geq 0}$ divided by $s$:
\begin{center}
    \begin{tabular}{|c|c|c|c|c|}
        \hline
        $x_{-1}$ & $x_0$ & $x_1$ & $x_2$ & $x_3$ \\
        \hline
        $-1$ & $0$ & $1$ & $T$ & $2\bigg(\cfrac{T}{2}\bigg)^2 + \bigg(\cfrac{ST}{2}-2\bigg) + 1$ \\
        \hline
        $-1$ & $0$ & $1$ & $2(P-R)$ & $2(P-R)^2 + (PR-2) + 1$ \\
        \hline
    \end{tabular}
\end{center}

In the cases (\ref{case-a}) and (\ref{case-c}), we use Table \ref{tab:coef-x-a-c-v1-v2}.
It contains the coefficients expressed in terms of $v_1$ and $v_2$.
Because $v_1 = x_1 = 1$ and $v_2 = x_2 = 2(P-R)$, the coefficients become
\begin{table}[ht]
    \centering
    \begin{tabular}{|c|c|c|c|}
        \hline
        $A_S$ & $B_S$ & $C_S$ & $D_S$ \\
        \hline
        $\cfrac{2P - R}{(P+R)\sqrt{P^2 - 4}}$
        & $-\cfrac{2P - R}{(P+R)\sqrt{P^2 - 4}}$
        & $\cfrac{2R - P}{(P+R)\sqrt{R^2 - 4}}$
        & $-\cfrac{2R - P}{(P+R)\sqrt{R^2 - 4}}$ \\
        \hline
    \end{tabular}
    \caption{The coefficients of the general term of the sequence $(|S_{n}^{s,t}|)_{n \geq 0}$ in the cases (\ref{case-a}) and (\ref{case-c})}
    \label{tab:sphere-total-distinct}
\end{table}

In the case (\ref{case-b}), we use Table \ref{tab:coef-x-b-v1-v2}.
It contains the coefficients expressed in terms of $v_1$ and $v_2$.
Because $R=2$, $v_1 = x_1 = 1$ and $v_2 = x_2 = 2(P-R)$, the coefficients become
\begin{table}[ht]
    \centering
    \begin{tabular}{|c|c|c|c|}
        \hline
        $A_S$ & $B_S$ & $C_S$ & $D_S$ \\
        \hline
        $\cfrac{2P - 2}{(P+2)\sqrt{P^2-4}}$
        & $-\cfrac{2P - 2}{(P+2)\sqrt{P^2-4}}$
        & $0$
        & $-\cfrac{4-P}{P+2}$ \\
        \hline
    \end{tabular}
    \caption{The coefficients of the general term of the sequence $(|S_{n}^{s,t}|)_{n \geq 0}$ in the case (\ref{case-b})}
    \label{tab:sphere-total-duplicate}
\end{table}

Next we find the general term of the sequence representing the number of $s$-vertices on spheres.
We denote this sequence by $(|V_n^{s,t}|)_{n \geq 0}$.
We consider a sequence $(x_n)_{n \ge -1}$, such that $|V_n^{s,t}| = s \cdot x_n$.
We find the formula of the general term of the sequence $(x_n)_{n \geq -1}$.
Below we give the initial terms of the sequence representing the number of $s$-vertices on spheres divided by $s$.
\begin{center}
    \begin{tabular}{|c|c|c|c|c|}
        \hline
        $x_{-1}$ & $x_0$ & $x_1$ & $x_2$ & $x_3$ \\
        \hline
        $0$ & $0$ & $0$ & $\cfrac{T}{2}$ & $\bigg(\cfrac{T}{2}\bigg)^2$ \\
        \hline
        $0$ & $0$ & $0$ & $P-R$ & $(P-R)^2$ \\
        \hline
    \end{tabular}
\end{center}

In the cases (\ref{case-a}) and (\ref{case-c}) we use Table \ref{tab:coef-x-a-c-v1-v2}.
It contains the coefficients expressed in terms of $v_1$ and $v_2$.
Because $v_1 = x_1 = 0$ and $v_2 = x_2 = P-R$, the coefficients become
\begin{table}[ht]
    \centering
    \begin{tabular}{|c|c|c|c|}
        \hline
        $A_V$ & $B_V$ & $C_V$ & $D_V$ \\
        \hline
        $\cfrac{P - R}{(P+R)\sqrt{P^2 - 4}}$
        & $-\cfrac{P - R}{(P+R)\sqrt{P^2 - 4}}$
        & $\cfrac{R - P}{(P+R)\sqrt{R^2 - 4}}$
        & $-\cfrac{R - P}{(P+R)\sqrt{R^2 - 4}}$ \\
        \hline
    \end{tabular}
    \caption{The coefficients of the general term of the sequence $(|V_{n}^{s,t}|)_{n \geq 0}$ in the cases (\ref{case-a}) and (\ref{case-c})}
    \label{tab:sphere-s-vertices-distinct}
\end{table}

In the case (\ref{case-b}) we use Table \ref{tab:coef-x-b-v1-v2}.
It contains the coefficients expressed in terms of $v_1$ and $v_2$.
Because $R=2$, $v_1 = x_1 = 0$ and $v_2 = x_2 = P-R$, the coefficients become
\begin{table}[ht]
    \centering
    \begin{tabular}{|c|c|c|c|}
        \hline
        $A_V$ & $B_V$ & $C_V$ & $D_V$ \\
        \hline
        $\cfrac{P - 2}{(P+2)\sqrt{P^2-4}}$
        & $-\cfrac{P - 2}{(P+2)\sqrt{P^2-4}}$
        & $0$
        & $-\cfrac{2-P}{P+2}$ \\
        \hline
    \end{tabular}
    \caption{The coefficients of the general term of the sequence $(|V_{n}^{s,t}|)_{n \geq 0}$ in the case (\ref{case-b})}
    \label{tab:sphere-s-vertices-duplicate}
\end{table}

The tables \ref{tab:sphere-total-distinct} and \ref{tab:sphere-total-duplicate} contain
the general terms of the sequences representing the sphere lengths.
Besides in the tables \ref{tab:sphere-s-vertices-distinct} and \ref{tab:sphere-s-vertices-duplicate} the general terms of the sequences representing the number of $s$-vertices on spheres are given.
Then, since $|S_n^{s,t}| = |V_n^{s,t}| + |W_n^{s,t}|$, we can find the general term of the sequence representing the number of $t$-vertices on spheres.
We denote this sequence by $(|W_n^{s,t}|)_{n \geq 0}$.
\begin{table}[ht]
    \centering
    \begin{tabular}{|c|c|c|c|}
        \hline
        $A_W$ & $B_W$ & $C_W$ & $D_W$ \\
        \hline
        $\cfrac{P}{(P+R)\sqrt{P^2 - 4}}$
        & $-\cfrac{P}{(P+R)\sqrt{P^2 - 4}}$
        & $\cfrac{R}{(P+R)\sqrt{R^2 - 4}}$
        & $-\cfrac{R}{(P+R)\sqrt{R^2 - 4}}$ \\
        \hline
    \end{tabular}
    \caption{The coefficients of the general term of the sequence $(|W_{n}^{s,t}|)_{n \geq 0}$ in the cases (\ref{case-a}) and (\ref{case-c})}
    \label{tab:sphere-t-vertices-distinct}
\end{table}

\begin{table}[ht]
    \centering
    \begin{tabular}{|c|c|c|c|}
        \hline
        $A_W$ & $B_W$ & $C_W$ & $D_W$ \\
        \hline
        $\cfrac{P}{(P+2)\sqrt{P^2-4}}$
        & $-\cfrac{P}{(P+2)\sqrt{P^2-4}}$
        & $0$
        & $-\cfrac{2}{P+2}$ \\
        \hline
    \end{tabular}
    \caption{The coefficients of the general term of the sequence $(|W_{n}^{s,t}|)_{n \geq 0}$ in the case (\ref{case-b})}
    \label{tab:sphere-t-vertices-duplicate}
\end{table}

In the case (\ref{case-b}) we have $R = 2$.
So in all cases ((\ref{case-a}), (\ref{case-b}) and (\ref{case-c})) we get
$$A_S = \cfrac{2P - R}{(P+R)\sqrt{P^2 - 4}},
A_V = \cfrac{P - R}{(P+R)\sqrt{P^2 - 4}},
A_W = \cfrac{P}{(P+R)\sqrt{P^2 - 4}}.$$

The Gaussian curvature inside a sphere $S_{n}^{s,t}$ is equal to the sum of the Gaussian curvatures at the vertices inside this sphere.
So for vertices inside $S_{n}^{s,t}$ we have:
\begin{itemize}
    \item the number of vertices $= 1 + \sum\limits_{k=0}^{n-1} |S_k^{s,t}|$,
    \item the number of $s$-vertices $= 1 + \sum\limits_{k=0}^{n-1} |V_k^{s,t}|$,
    \item the number of $t$-vertices $= \sum\limits_{k=0}^{n-1} |W_k^{s,t}|$
\end{itemize}
Because the sphere $S_0^{s,t}$ has length $0$, we have to count the center $s$-vertex as well.
The first two expressions above have therefore an extra vertex added to the sum.
The constant $1$ plays no role, however, when we compute the limit below.

The curvature inside $S_{n}^{s,t}$ is
$$K(S_{n}^{s,t}) = (6-s) \bigg( 1 + \sum_{k=0}^{n-1} |V_k^{s,t}| \bigg) + (6-t)\sum_{k=0}^{n-1} |W_k^{s,t}|$$
For each vertex inside $S_{n}^{s,t}$, the average curvature is
$$K_{Avg}(S_{n}^{s,t}) = \cfrac{(6-s) \bigg( 1 + \sum\limits_{k=0}^{n-1} |V_k^{s,t}| \bigg) +
(6-t)\sum\limits_{k=0}^{n-1} |W_k^{s,t}|}
{1 + \sum\limits_{k=0}^{n-1} |S_k^{s,t}|}$$

The sequences $(x_{n})_{n \geq 0} \in \{(|S_{n}^{s,t}|)_{n \geq 0}, (|V_{n}^{s,t}|)_{n \geq 0}, (|W_{n}^{s,t}|)_{n \geq 0}\}$ follow the same recurrence relation.
By (\ref{eq:limit-sum-over-last-p}), we have 
$$\lim\limits_{n\to\infty} \cfrac{\sum\limits_{k = 0}^n x_k}{x_n} =
\cfrac{1 + \sqrt{\cfrac{P+2}{P-2}}}{2}$$
So 
$$\lim\limits_{n\to\infty} K_{Avg}(S_{n}^{s,t}) =
\lim\limits_{n\to\infty} \cfrac{(6-s) \cdot \bigg(1 + \sum\limits_{k=0}^{n-1} |V_k^{s,t}|\bigg) +
(6-t) \cdot \sum\limits_{k=0}^{n-1} |W_k^{s,t}|}{1 + \sum\limits_{k=0}^{n-1} |S_k^{s,t}|} =$$
$$= \lim\limits_{n\to\infty} \cfrac{(6-s) \cdot |V_{n-1}^{s,t}| \cdot \cfrac{\sum\limits_{k=0}^{n-1} |V_k^{s,t}|}{|V_{n-1}^{s,t}|}
+ (6-t)\cdot |W_{n-1}^{s,t}| \cdot \cfrac{\sum\limits_{k=0}^{n-1} |W_k^{s,t}|}{|W_{n-1}^{s,t}|}}
{|S_{n-1}^{s,t}| \cdot \cfrac{\sum\limits_{k=0}^{n-1} |S_k^{s,t}|}{|S_{n-1}^{s,t}|}} =$$
$$= \cfrac{(6-s) \cdot \lim\limits_{n\to\infty} |V_{n-1}^{s,t}| \cdot \lim\limits_{n\to\infty} \cfrac{\sum\limits_{k=0}^{n-1} |V_k^{s,t}|}{|V_{n-1}^{s,t}|}
+ (6-t) \cdot \lim\limits_{n\to\infty} |W_{n-1}^{s,t}| \cdot \lim\limits_{n\to\infty} \cfrac{\sum\limits_{k=0}^{n-1} |W_k^{s,t}|}{|W_{n-1}^{s,t}|}}
{\lim\limits_{n\to\infty} |S_{n-1}^{s,t}| \cdot \lim\limits_{n\to\infty} \cfrac{\sum\limits_{k=0}^{n-1} |S_k^{s,t}|}{|S_{n-1}^{s,t}|}} =$$
$$= \cfrac{(6-s) \cdot \lim\limits_{n\to\infty} |V_{n-1}^{s,t}| \cdot \cfrac{1 + \sqrt{\cfrac{P+2}{P-2}}}{2}
+ (6-t) \cdot \lim\limits_{n\to\infty} |W_{n-1}^{s,t}| \cdot \cfrac{1 + \sqrt{\cfrac{P+2}{P-2}}}{2}}
{\lim\limits_{n\to\infty} |S_{n-1}^{s,t}| \cdot \cfrac{1 + \sqrt{\cfrac{P+2}{P-2}}}{2}} =$$
$$ = \lim\limits_{n\to\infty} \cfrac{(6-s) \cdot |V_{n-1}^{s,t}| + (6-t) \cdot |W_{n-1}^{s,t}|}{|S_{n-1}^{s,t}|} = $$
$$= \lim\limits_{n\to\infty} \Bigg[ \cfrac{(6-s) \cdot (A_V a^n + B_V b^n + C_V c^n + D_V d^n)}{A_S a^n + B_S b^n + C_S c^n + D_S d^n} +$$
$$+ \cfrac{(6-t) \cdot (A_W a^n + B_W b^n + C_W c^n + D_W d^n)}
{A_S a^n + B_S b^n + C_S c^n + D_S d^n} \Bigg] =$$
$$= \lim\limits_{n\to\infty} \Vast\{ \cfrac{
(6-s) \cdot \bigg[ A_V + B_V \bigg(\cfrac{b}{a}\bigg)^n  + C_V \bigg(\cfrac{c}{a}\bigg)^n  + D_V \bigg(\cfrac{d}{a}\bigg)^n \bigg]}
{A_S + B_S \bigg(\cfrac{b}{a}\bigg)^n + C_S \bigg(\cfrac{c}{a}\bigg)^n + D_S \bigg(\cfrac{d}{a}\bigg)^n} +$$
$$+ \cfrac{(6-t)\cdot \bigg[ A_W + B_W \bigg(\cfrac{b}{a}\bigg)^n + C_W \bigg(\cfrac{c}{a}\bigg)^n + D_W \bigg(\cfrac{d}{a}\bigg)^n \bigg]}
{A_S + B_S \bigg(\cfrac{b}{a}\bigg)^n + C_S \bigg(\cfrac{c}{a}\bigg)^n + D_S \bigg(\cfrac{d}{a}\bigg)^n} \Vast\}=$$
$$= \cfrac{(6-s) A_V + (6-t) A_W}{A_S} =$$
$$= \cfrac{(6-s) (P-R) + (6-t) P}{2P-R}$$

As $S = s-4$ and $T = t-4$, we get $6-s = 2-S$ and $6-t = 2-T$.
Then, due to (\ref{eq:system-hyperbolas}), we have \begin{center}
$\begin{cases}
    P - R = \cfrac{T}{2} \\
    PR = \cfrac{ST}{2}
\end{cases}$ 
$\implies$
$\begin{cases}
    T = 2(P - R) \\
    S = \cfrac{PR}{P-R}
\end{cases}$.\end{center}
So 
$$\lim\limits_{n\to\infty} K_{Avg}(S_{n}^{s,t}) = \cfrac{(2-S) (P-R) + (2-T) P}{2P-R} =$$
$$= \cfrac{\bigg(2-\cfrac{PR}{P-R}\bigg) (P-R) + [2-2(P - R)] P}{2P-R} =$$
$$= \cfrac{2(P-R) - PR - 2P(P - R - 1)}{2P-R} =$$
$$= \cfrac{2P - 2R - PR - 2P^2 + 2PR + 2P}{2P-R} =$$
$$= \cfrac{4P - 2R - 2P^2 + PR}{2P-R} =$$
$$= \cfrac{2(2P - R) - P(2P - R)}{2P-R} =$$
$$= 2 - P$$

Replacing $P$ with the value given in (\ref{eq:p-in-terms-of-s-t}) completes the proof.
\end{proof}

The $(s,t)$-uniform complexes on the same red and green hyperbolas in Figure \ref{fig:table-hyperbolas-s-t} correspond to the same value of $P$.
This implies that the limit as $n\to\infty$ of the average curvature inside spheres $S_{n}^{s,t}$ is constant.

For $t$ odd, note that in a $t$-uniform complex a $t$-vertex has constant curvature $6-t$.
So the average curvature inside any loop in such complex equals $6-t$.
As before, one can identify on the graph a few sets of $(s,t)$-uniform complexes with the same limit of the average curvature inside spheres.
Namely,

\renewcommand{\arraystretch}{2}
\begin{table}[ht]
    \begin{tabular}{|l|c|}
        \hline
        $(s,t)$-uniform complexes & $\lim\limits_{n\to\infty} K_{Avg}(S_{n}^{s,t})$ \\
        \hline
        $(10, 6), (7, 7)$ & $-1$ \\
        \hline
        $(16, 6), (8, 8)$ & $-2$ \\
        \hline
        $(24, 6), (9, 9)$ & $-3$ \\
        \hline
        $(34, 6), (16, 8)$, $(10, 10), (7, 12)$ & $-4$ \\
        \hline
    \end{tabular}
    \caption{Sets of $(s,t)$-uniform simplicial complexes with the same limit of the average curvature inside spheres}
    \label{tab:complexes-same-average-curvature}
\end{table}
\renewcommand{\arraystretch}{1}

\begin{bibdiv}
\begin{biblist}

\bib{BH}{article}{
    author={Bridson, M.},
    author={Haefliger, A.},
    title={Metric spaces of nonpositive curvature},
    journal={Grundlehren der mathematischen Wissenschaften $319$},
    volume={Springer},
    date={1999},
    %   doi={10.1016/j.aim.2013.04.009},
}

\bib{ChaCHO}{article}{
    author={Chalopin, J.},
    author={Chepoi, V.},
    author={Hirai,H.},
    author={Osajda, D.},
    title={Weakly modular graphs and nonpositive curvature},
    status    ={to appear in Mem. Amer. Math. Soc.},
    eprint    ={arXiv:1409.3892},
    date      ={2018}
}

\bib{Ch}{article}{
    author={Chepoi, V.},
    title={Graphs of some CAT(0) complexes},
    journal={Adv. in Appl. Math.},
    volume={24},
    date={2000},
    number={2},
    pages={125--179},
    %   review={\MR{3280043}},
    %   doi={10.1090/S0002-9947-2014-06137-0},
}

\bib{E1}{article}{
    author={Elsener, T.},
    author={},
    title={Flats and flat torus theorem in systolic spaces},
    journal={Geometry and Topology},
    volume={13},
    date={2009},
    number={},
    pages={661--698},
    issn={},
    %   review={\MR{3280043}},
    %   doi={10.1090/S0002-9947-2014-06137-0},
}

\bib{JS1}{article}{
    author={Januszkiewicz, T.},
    author={{\'S}wi{\c{a}}tkowski, J.},
    title={Simplicial nonpositive curvature},
    journal={Publ. Math. Inst. Hautes \'Etudes Sci.},
    number={104},
    date={2006},
    pages={1--85},
    issn={0073-8301},
    %   review={\MR{2264834 (2007j:53044)}},
    %   doi={10.1007/s10240-006-0038-5},
}

\bib{JS2}{article}{
    author={Januszkiewicz, T.},
    author={{\'S}wi{\c{a}}tkowski, J.},
    title={Filling invariants of systolic complexes and groups},
    journal={Geom. Topol.},
    volume={11},
    date={2007},
    pages={727--758},
    issn={1465-3060},
    %   review={\MR{2302501 (2008d:20079)}},
    %   doi={10.2140/gt.2007.11.727},
}

\bib{L-8loc}{article}{
    title={A combinatorial negative curvature condition implying Gromov hyperbolicity},
    author={Laz\u{a}r, I.-C.},
    status={},
    eprint={arXiv:1501.05487v3},
    date={2015}
}

\bib{L-8loc2}{article}{
    title={Minimal disc diagrams of $5/9$-simplicial complexes},
    author={Laz\u{a}r, I.-C.},
    status={to appear in Michigan Math. J.},
    eprint={arXiv:1509.03760},
    date={2015}
}

\bib{L-t-unif}{article}{
    title={Lucas sequences in t-uniform simplicial complexes},
    author={Laz\u{a}r, I.-C.},
    status={},
    eprint={arXiv:1904.06555},
    date={2019}
}

\bib{O-8loc}{article}{
    author={Osajda, D.},
    title={Combinatorial negative curvature and triangulations of three-manifolds},
    journal={Indiana Univ. Math. J.},
    volume={64},
    date={2015},
    number={3},
    pages={943--956},
}

\bib{Pr}{article}{
    title={Infinite systolic groups are not torsion},
    author={Prytula, T.},
    status={},
    journal={Colloquium Mathematicum},
    volume={153},
    date={2018},
    number={2},
    pages={169--194},
}

\bib{Ri}{book}{
    title={My Numbers, My Friends: Popular Lectures on Number Theory},
    author={Ribenboim, P.},
    publisher={Springer-Verlag New York, Inc.},
    date={2000}
}

\end{biblist}
\end{bibdiv}

\end{document}